\newcolumntype{t}{>{\ttfamily}c}
\newcommand*\conj[1]{\bar{#1}}
\newcommand*\bigo{\mathcal{O}}
\newcommand*\C{\mathbb{C}}
\newcommand*\R{\mathbb{R}}
\DeclareMathOperator{\mat}{mat}
\crefname{algorithm}{algorithm}{algorithms}
\crefname{remark}{remark}{remarks}
\title{Variable projection methods for an
  optimized dynamic mode decomposition
  \thanks{Submitted to the editors DATE.
    \funding{Air Force Office of Scientific Research
      (AFOSR) grant FA9550-15-1-0385.}}}
\author{Travis Askham
  \thanks{Department of Applied Mathematics,
    University of Washington, Seattle, WA
    (\email{askham@uw.edu}, \email{kutz@uw.edu}).}
v  \and
  J. Nathan Kutz
  \footnotemark[2]
}
\begin{document}

\maketitle

\begin{abstract}
The dynamic mode decomposition (DMD) has become a 
leading tool for data-driven modeling of dynamical 
systems, providing a regression framework for 
fitting linear dynamical models to time-series measurement 
data.  We present a simple algorithm for computing an 
optimized version of the DMD for data which may be collected 
at unevenly spaced sample times. By making use of the 
variable projection method for nonlinear least squares 
problems, the algorithm is capable of solving the underlying 
nonlinear optimization problem efficiently. We explore the 
performance of the algorithm with some numerical examples 
for synthetic and real data from dynamical systems and find 
that the resulting decomposition displays less bias in 
the presence of noise than standard DMD algorithms. Because 
of the flexibility of the algorithm, we also present some 
interesting new options for DMD-based analysis.   
\end{abstract}

\begin{keyword}
  dynamic mode decomposition, inverse linear systems,
  variable projection algorithm,
  inverse differential equations
\end{keyword}

\begin{AMS}
  37M02, 65P02, 49M02
\end{AMS}


\section{Introduction}

Suppose that ${\mathbf z}_j \in \C^n$ are snapshots of
a dynamical system $\dot{{\mathbf z}}(t) = {\mathbf f}({\mathbf z}(t))$
at equispaced times
$t_j = j \Delta t$. Let ${\mathbf A}$ be the best fit
\textit{linear} operator which maps each ${\mathbf z}_j$
to ${\mathbf z}_{j+1}$.
The dynamic mode decomposition (DMD) is defined
as the set of eigenvector, eigenvalue pairs of
${\mathbf A}$. The DMD is then a way of decomposing
the data into dominant modes, each with an associated
frequency of oscillation and rate of growth/decay.
This is an alternative decomposition to the 
proper orthogonal decomposition (POD): whereas the
DMD provides dynamical information about the system
but the modes are not orthogonal, the POD provides 
orthogonal modes but no dynamical information.
As such, the DMD is an enabling data-driven 
modeling strategy since it provides a best-fit, linear 
characterization of a nonlinear dynamical system from data alone.  

The DMD has its roots in the fluid dynamics community,
where it was applied to the analysis of numerical simulations
and experimental data of fluid flows
\cite{rowley2009,schmid2010}. 
Over the past decade, its popularity has grown and it
has been applied as a diagnostic tool,
as a means of model order reduction, and as a component
of optimal controller design for a variety of 
dynamical systems.
The DMD also has connections to the Koopman spectral
analysis of nonlinear dynamical systems, a line of
inquiry which has been pursued in, inter alia,
\cite{rowley2009,bagheri2013,mezic2013,tu2013}. In
particular, the DMD shows promise as a tool for the
analysis of general nonlinear systems. We will not
stress this aspect here but rather focus on the
DMD as an algorithm for approximating data by a linear
system.

A well-studied pitfall of the DMD is that 
the computed eigenvalues are biased by the presence
of sensor noise \cite{hemati2015,dawson2016}. Intuitively, this
is a result of the fact that the standard algorithms
treat the data pairwise, i.e. snapshot to snapshot rather
than as a whole, and favor one direction (forward 
in time). In \cite{dawson2016}, Dawson et al.
present several methods for debiasing within
the standard DMD framework. These methods have the advantage
that they can be computed with essentially the same set of
robust and fast tools as the standard DMD.

As an alternative, the \textit{optimized DMD} of
\cite{chen2012} treats all of the snapshots of
the data at once. This avoids much of the bias
of the original DMD but requires the solution of
a (potentially) large nonlinear optimization
problem.
It is believed that the ``nonconvexity
of the optimization [required for the optimized DMD]
potentially limits its utility''
\cite{dawson2016} but the results of this paper
suggest that the optimized DMD should be the DMD
algorithm of choice in many settings.
We will present some efficient algorithms for computing
the optimized DMD and discuss its properties.

The primary computational tool at the heart of these
algorithms is the variable 
projection method \cite{golub1973}.
To apply variable projection, the DMD is rephrased as a problem 
in exponential data fitting (specifically, inverse differential
equations), an area of research 
which has been extensively developed and has many applications 
\cite{golub1979,pereyra2010}. The variable projection
method leverages the special structure of the 
exponential data fitting problem, so that many of the 
unknowns may be eliminated from the optimization.
An additional benefit of these tools is that the 
snapshots of data no longer need to be taken at 
regular intervals, i.e. the sample times do not 
need to be equispaced. We suggest a pair of algorithms,
each a modified version of the original algorithm of \cite{golub1979}, 
for computing the optimized DMD and an initialization scheme
based on the standard DMD. 

The rest of this paper is organized as follows. In
\cref{sec:background}, we present some of
the relevant preliminaries of variable projection and
the DMD. In \cref{sec:algorithm}, we present the
definition, algorithms, and an initialization scheme
for the optimized DMD.  In \cref{sec:examples}, we
demonstrate
the low inherent bias of the algorithm in the presence of noise
on some simple examples and present some applications of the 
method to both synthetic and real data sets, some with
snapshots whose sample times are unevenly spaced. The final 
section contains some concluding thoughts and ideas
for further research.

\section{Preliminaries} \label{sec:background}

\subsection{Notation} \label{subsec:notation}

Throughout this paper we use mostly standard notation,
with some MATLAB style notation for convenience.
Matrices are typically denoted by bold, capital letters
and vectors by bold, lower-case. Let ${\mathbf A}$ and
${\mathbf B}$ be matrices
and ${\mathbf v}$ a vector of length $m$. Then

\begin{itemize}
\item $v_i$ denotes the $i$th entry of ${\mathbf v}$;
\item $A_{i,j}$ denotes the entry in the $i$th row
  and $j$th column of ${\mathbf A}$;
\item ${\mathbf v}_i$ denotes the $i$th vector in a
  sequence of vectors;
\item $\conj{{\mathbf A}}$ denotes the entrywise complex conjugate
  of ${\mathbf A}$;
\item ${\mathbf A}^\intercal$ denotes the transpose of
  ${\mathbf A}$, which satisfies
  $A^\intercal_{i,j} = A_{j,i}$;
\item ${\mathbf A}^*$ denotes the complex conjugate
  transpose of ${\mathbf A}$, which satisfies
  ${\mathbf A}^* = \conj{{\mathbf A}^\intercal}$;
\item ${\mathbf A}^\dagger$ denotes the Moore-Penrose pseudoinverse
  of ${\mathbf A}$, which satisfies ${\mathbf A}{\mathbf A}^\dagger {\mathbf A} = {\mathbf A}$,
  ${\mathbf A}^\dagger {\mathbf A} {\mathbf A}^\dagger = {\mathbf A}^\dagger$,
  $({\mathbf A}{\mathbf A}^\dagger)^* = {\mathbf A}{\mathbf A}^\dagger$, and $({\mathbf A}^\dagger {\mathbf A})^* = {\mathbf A}^\dagger {\mathbf A}$;
\item ${\mathbf A}(i_1  :  i_2,j_1  :  j_2)$ denotes the submatrix corresponding
  to the $i_1$th through $i_2$th rows and $j_1$th through $j_2$th
  columns of ${\mathbf A}$.
\item ${\mathbf A}( :  ,j)$ denotes the vector given by the $j$th
  column of ${\mathbf A}$;
\item ${\mathbf A}( : )$ denotes the vector which results by stacking
  all of the columns of ${\mathbf A}$, taken in order;
\item ${\mathbf A} = \mbox{diag}({\mathbf v})$ denotes the square matrix of
  size $m\times m$ which satisfies $A_{i,i} = v_i$ and
  $A_{i,j} = 0$ for $i\neq j$;
\item and ${\mathbf A} \otimes {\mathbf B}$ denotes the Kronecker of ${\mathbf A}$ and ${\mathbf B}$, e.g.
  if ${\mathbf A}$ is $2\times 2$ then

  \begin{equation}
    {\mathbf A}\otimes {\mathbf B} = \begin{pmatrix}
      A_{1,1} {\mathbf B} & A_{1,2} {\mathbf B} \\
      A_{2,1} {\mathbf B} & A_{2,2} {\mathbf B}
    \end{pmatrix} \; .
  \end{equation}
\end{itemize}

\subsection{Variable projection} \label{subsec:varpro}

In this section, we will review some details
of the classical variable projection algorithms
\cite{golub1973,kaufman1975,golub1979,golub2003,oleary2013}
which are relevant to the optimized DMD and our
implementation of the method. We also include a
brief coda concerning modern advances in the
variable projection framework.

\subsubsection{Nonlinear least squares}

The variable projection algorithm was
originally conceived for the solution of separable
nonlinear least squares problems. The vector version
of a separable least squares problem is of the form

\begin{equation} \label{eq:snlls}
  \mbox{minimize} \| {\mathbf {\boldsymbol \eta}} - {\boldsymbol \Phi}({\boldsymbol \alpha}) {\boldsymbol \beta} \|_2
  \qquad \mbox{ over } {\boldsymbol \alpha} \in \C^k, {\boldsymbol \beta} \in \C^l \; ,
\end{equation}
where ${\mathbf {\boldsymbol \eta}} \in \C^m$, ${\boldsymbol \Phi}({\boldsymbol \alpha}) \in \C^{m\times l}$,
and $m > l$.
A typical example of such a problem is the approximation
of a function $\eta(t)$ by a linear combination of $l$
nonlinear functions $\phi_j({\boldsymbol \alpha},t)$ with coefficients $\beta_j$.
In this case, we set $\eta_i = \eta(t_i)$ and
$\Phi({\boldsymbol \alpha})_{i,j} = \phi_j({\boldsymbol \alpha},t_i)$
 for $m$ sample times $t_i$.
Here, and in the remainder of the paper, the dependence of
${\boldsymbol \Phi}({\boldsymbol \alpha})$ on the times
$t_i$ is implicit.

The key to the variable projection algorithm is
the following observation: for a fixed ${\boldsymbol \alpha}$, the
${\boldsymbol \beta}$ which minimizes $\| {\boldsymbol \eta} - {\boldsymbol \Phi}({\boldsymbol \alpha}) {\boldsymbol \beta} \|_2$
is given by ${\boldsymbol \beta} = {\boldsymbol \Phi}({\boldsymbol \alpha})^\dagger {\boldsymbol \eta}$. With this
observation, we can rewrite the minimization problem
\cref{eq:snlls} in terms of ${\boldsymbol \alpha}$ alone, solve
for the minimizer $\hat{{\boldsymbol \alpha}}$, and recover the
coefficients ${\boldsymbol \beta}$ corresponding to this minimizer via
$\hat{{\boldsymbol \beta}} = {\boldsymbol \Phi}(\hat{{\boldsymbol \alpha}})^\dagger {\boldsymbol \eta}$.
It is clear that the minimization problem in ${\boldsymbol \alpha}$
alone is equivalent to

\begin{equation} \label{eq:snllsproj}
  \mbox{minimize} \dfrac12 \| {\boldsymbol \eta} - {\boldsymbol \Phi}({\boldsymbol \alpha}) {\boldsymbol \Phi}({\boldsymbol \alpha})^\dagger {\boldsymbol \eta} \|_2^2
  \qquad \mbox{ over } {\boldsymbol \alpha} \in \C^k \; ,
\end{equation}
where we have squared the error and rescaled for
notational convenience.

Typically \cite{kaufman1975,golub1979,golub2003,oleary2013},
the Levenberg-Marquardt algorithm
\cite{levenberg1944,marquardt1963} is used for
the solution of the new minimization problem
\cref{eq:snllsproj}.
This is an iterative procedure for solving
\cref{eq:snllsproj}, which
produces a sequence of vectors ${\boldsymbol \alpha}_i$ which
should converge to a nearby (local) minimizer.
Let

\begin{equation}
  {\boldsymbol \rho}({\boldsymbol \alpha}) = {\boldsymbol \eta} - {\boldsymbol \Phi}({\boldsymbol \alpha}) {\boldsymbol \Phi}({\boldsymbol \alpha})^\dagger {\boldsymbol \eta}
\end{equation}
denote the residual.
We will use ${\boldsymbol \delta}_i$ to denote the update to ${\boldsymbol \alpha}_i$,
so that ${\boldsymbol \alpha}_{i+1} = {\boldsymbol \alpha}_i - {\boldsymbol \delta}_i$ and assume
that a parameter $\nu_i$ is specified at each iteration. The
Levenberg-Marquardt update is defined to be the solution of

\begin{equation}
  \mbox{minimize} \left  \|
  \begin{pmatrix} {\mathbf J}({\boldsymbol \alpha}_i) \\ \nu_i {\mathbf M}({\boldsymbol \alpha}_i) 
  \end{pmatrix} {\boldsymbol \delta}_i - \begin{pmatrix} {\boldsymbol \rho}({\boldsymbol \alpha}_i) \\ 0
  \end{pmatrix}
  \right \|_2^2
  \qquad \mbox{ over } {\boldsymbol \delta}_i \in \C^k \; ,
\end{equation}
where ${\mathbf J}({\boldsymbol \alpha}_i)$ is the Jacobian of ${\boldsymbol \rho}({\boldsymbol \alpha})$
evaluated at ${\boldsymbol \alpha}_i$ and ${\mathbf M}({\boldsymbol \alpha}_i)$ is a
diagonal matrix of scalings such that $M({\boldsymbol \alpha}_i)_{jj} =
\|{\mathbf J}({\boldsymbol \alpha}_i)( : ,j)\|_2$. Typically the parameter
$\nu_i$ is chosen as part of a trust-region method,
i.e. $\nu_i$ is increased until a step is found so that
the new ${\boldsymbol \alpha}_{i+1}$ results in a smaller residual.
When possible, the parameter $\nu_i$ is reduced so that
the update is more like a standard Gauss-Newton update,
which results in a fast convergence rate. 
Ruhe and Wedin \cite{ruhe1980} showed that
when superlinear convergence occurs
for Gauss-Newton applied to the original
problem \cref{eq:snlls}, then it also occurs for the
projected problem \cref{eq:snllsproj}.
See \cite{marquardt1963,osborne1976} for more detail
on choosing $\nu_i$ and the overall structure
of the Levenberg-Marquardt method.

In order to apply this method, we must have an expression
for the Jacobian of ${\boldsymbol \rho}({\boldsymbol \alpha})$. Typically,
the derivatives of ${\boldsymbol \Phi}$ with respect to ${\boldsymbol \alpha}$
are known analytically, e.g. they are simple to
obtain in the case that $\phi_j({\boldsymbol \alpha},t) =
\exp( \alpha_j t)$.
We therefore assume that these derivatives are available.
In the following, we will leave out the dependence of
the matrices on ${\boldsymbol \alpha}$ in order to simplify the
notation.
Let ${\mathbb P}_{ \Phi}$ denote the orthogonal projection onto
the columns of ${\boldsymbol \Phi}$, i.e. ${\mathbb P}_{ \Phi} = {\boldsymbol \Phi} {\boldsymbol \Phi}^\dagger$,
and ${\mathbb P}_{ \Phi}^\perp$ denote the projection onto the complement
of the column space of ${\boldsymbol \Phi}$, i.e.
${\mathbb P}_{ \Phi}^\perp = {\mathbf I} - {\boldsymbol \Phi} {\boldsymbol \Phi}^\dagger$. Note that
${\boldsymbol \rho} = {\mathbb P}_{ \Phi}^\perp {\boldsymbol \eta}$. From Lemma 4.1 of \cite{golub1973},
we have

\begin{equation}
{\mathbf J}( : ,j) =  \frac{\partial {\boldsymbol \rho}}{\partial \alpha_j} =
  - \left ( {\mathbb P}_{ \Phi}^\perp \dfrac{\partial {\boldsymbol \Phi}}{\partial \alpha_j}
  {\boldsymbol \Phi}^\dagger + \left ({\mathbb P}_{ \Phi}^\perp \dfrac{\partial {\boldsymbol \Phi}}{\partial \alpha_j}
  {\boldsymbol \Phi}^\dagger \right )^* \right ) {\boldsymbol \eta} \; . 
\end{equation}
Kaufman \cite{kaufman1975} recommends the approximation

\begin{equation}
{\mathbf J}( : ,j) =  \frac{\partial {\boldsymbol \rho}}{\partial \alpha_j} \approx
  - {\mathbb P}_{ \Phi}^\perp \dfrac{\partial {\boldsymbol \Phi}}{\partial \alpha_j}
  {\boldsymbol \Phi}^\dagger {\boldsymbol \eta} \; , \label{eq:kauf}
\end{equation}
which is accurate for small residuals. This
approximation is used in \cite{golub1979}
and there is
some debate over whether this approximation to the
Jacobian is superior to the full expression,
see, inter alia, \cite{mullen2007,oleary2013}.
In our MATLAB implementation \cite{askham2017optdmd}, we have used the
full expression.

All of the terms in the above expression can be computed
by making use of the singular value decomposition
(SVD) of ${\boldsymbol \Phi}$. Let $q$ be the rank of ${\boldsymbol \Phi}$.
The (reduced) SVD of a matrix ${\boldsymbol \Phi}$ provides three
matrices ${\mathbf U}$, ${\boldsymbol \Sigma}$, and ${\mathbf V}$ such that ${\boldsymbol \Phi} = {\mathbf U}{\boldsymbol \Sigma} {\mathbf V}^*$,
${\mathbf U} \in \C^{m\times q}$ and ${\mathbf V} \in \C^{l\times q}$ have orthonormal columns,
and ${\boldsymbol \Sigma} \in \R^{q\times q}$ is diagonal with nonnegative
entries.
Given the SVD of ${\boldsymbol \Phi}$, we have that

\begin{equation}
  {\mathbb P}_{ \Phi}^\perp \dfrac{\partial {\boldsymbol \Phi}}{\partial \alpha_j}
  {\boldsymbol \Phi}^\dagger {\boldsymbol \eta} = ({\mathbf I}-{\mathbf U}{\mathbf U}^*) \dfrac{\partial {\boldsymbol \Phi}}{\partial \alpha_j} {\boldsymbol \beta} \; ,
\end{equation}
where we have solved for ${\boldsymbol \beta}$ using
${\boldsymbol \beta} = {\mathbf V}{\boldsymbol \Sigma}^{-1}{\mathbf U}^* {\boldsymbol \eta}$, and

\begin{equation}
  \left ({\mathbb P}_{ \Phi}^\perp \dfrac{\partial {\boldsymbol \Phi}}{\partial \alpha_j}
  {\boldsymbol \Phi}^\dagger \right )^* {\boldsymbol \eta} = {\mathbf U}{\boldsymbol \Sigma}^{-1} {\mathbf V}^*\dfrac{\partial {\boldsymbol \Phi}}{\partial \alpha_j}^* {\boldsymbol \rho} \; ,
\end{equation}
where we have used the fact that
$({\mathbb P}_{ \Phi}^\perp)^* {\boldsymbol \eta} = {\mathbb P}_{ \Phi}^\perp {\boldsymbol \eta} = {\boldsymbol \rho}$.

%
%

\subsubsection{Variable projection for multiple right hand 
sides} \label{subsec:mrhs}

One of the primary innovations of \cite{golub1979}
was the extension of the variable projection method
developed above to the case of
multiple right hand sides, i.e. to the problem 

\begin{equation} \label{eq:snllsmrhs}
  \mbox{minimize} \| {\mathbf H} - {\boldsymbol \Phi}({\boldsymbol \alpha}) {\mathbf B} \|_F
  \qquad \mbox{ over } {\boldsymbol \alpha} \in \C^k, {\mathbf B} \in \C^{l\times n} \; ,
\end{equation}
where ${\mathbf H} \in \C^{m\times n}$, ${\boldsymbol \Phi}({\boldsymbol \alpha}) \in \C^{m\times l}$,
and $m>l$. A typical example of such a problem is the 
approximation of $n$ functions $\eta_p(t)$, each by a linear combination
of $l$ nonlinear functions $\phi_j({\boldsymbol \alpha},t)$ with coefficients
$B_{j,p}$.
In this case, we have $H_{i,p} = \eta_p(t_i)$ and
$ \Phi({\boldsymbol \alpha})_{i,j}$ is as before, ${ \Phi}({\boldsymbol \alpha})_{i,j} = \phi_j({\boldsymbol \alpha},t_i)$.
We note that the vector of parameters
${\boldsymbol \alpha}$ is the same for 
each function $\eta_p$, so that the problem is coupled.

This problem can be solved efficiently using ideas similar to
those outlined for the case of a single right hand side above.
In order to use the same language as for the vector case,
we need to reshape problem \cref{eq:snllsmrhs}.
Let ${\boldsymbol \eta} = {\mathbf H}( : )$ and ${\boldsymbol \beta} = {\mathbf B}( : )$. Then \cref{eq:snllsmrhs}
is equivalent to

\begin{equation} \label{eq:snllsmrhs_equiv}
  \mbox{minimize} \| {\boldsymbol \eta} - {\mathbf I}_n \otimes {\boldsymbol \Phi}({\boldsymbol \alpha}) {\boldsymbol \beta} \|_2
  \qquad \mbox{ over } {\boldsymbol \alpha} \in \C^k, {\boldsymbol \beta} \in \C^{ln} \; .
\end{equation}
For a given ${\boldsymbol \alpha}$, the matrix ${\mathbf B}$ is given by
${\boldsymbol \Phi}^\dagger {\mathbf H}$ so that the computation of ${\boldsymbol \beta}$ can be
done in blocked form. Likewise, the computation of
${\boldsymbol \rho}$ can be blocked. Let ${\mathbf P} = {\mathbf H}-{\boldsymbol \Phi} {\mathbf B}$. Then ${\boldsymbol \rho} = {\mathbf P}( : )$.
Importantly, the formation of the Jacobian can also
be blocked. If we set

\begin{equation}
  {\mathbf J}^{\mat}_j = - \left ( {\mathbb P}_{ \Phi}^\perp \dfrac{\partial {\boldsymbol \Phi}}{\partial \alpha_j}
  {\boldsymbol \Phi}^\dagger + \left ({\mathbb P}_{ \Phi}^\perp \dfrac{\partial {\boldsymbol \Phi}}{\partial \alpha_j}
  {\boldsymbol \Phi}^\dagger \right )^* \right ) {\mathbf H} \; ,
\end{equation}
then ${\mathbf J}( : ,j) = {\mathbf J}^{\mat}_j( : )$. As above, if the SVD of
${\boldsymbol \Phi}$ is computed, we may write

\begin{equation}
  {\mathbb P}_{ \Phi}^\perp \dfrac{\partial {\boldsymbol \Phi}}{\partial \alpha_j}
  {\boldsymbol \Phi}^\dagger {\mathbf H} = ({\mathbf I}-{\mathbf U}{\mathbf U}^*) \dfrac{\partial {\boldsymbol \Phi}}{\partial \alpha_j} {\mathbf B} \; ,
  \label{eq:jac1blocked}
\end{equation}
where we have solved for ${\mathbf B}$ using
${\mathbf B} = {\mathbf V}{\boldsymbol \Sigma}^{-1}{\mathbf U}^* {\mathbf H}$, and

\begin{equation}
  \left ({\mathbb P}_{ \Phi}^\perp \dfrac{\partial {\boldsymbol \Phi}}{\partial \alpha_j}
  {\boldsymbol \Phi}^\dagger \right )^* {\mathbf H} = {\mathbf U}{\boldsymbol \Sigma}^{-1} {\mathbf V}^*\dfrac{\partial {\boldsymbol \Phi}}{\partial \alpha_j}^* {\mathbf P} \; ,
  \label{eq:jac2blocked}
\end{equation}
where we have used the fact that
$({\mathbb P}_{ \Phi}^\perp)^* {\mathbf H} = {\mathbb P}_{ \Phi}^\perp {\mathbf H} = {\mathbf P}$.

Because this is the version of the variable projection algorithm
used for the computations in this paper, we will briefly discuss
its computational cost. The matrices of partial derivatives of
${\boldsymbol \Phi}$, i.e. 

\begin{equation}
{\mathbf D}_j  = \dfrac{\partial {\boldsymbol \Phi}}{\partial \alpha_j} \nonumber \; ,
\end{equation}
are often sparse in applications. As this is the case in our
application, we will make the simplifying assumption that
these matrices have one nonzero column. In our implementation,
these matrices are stored in MATLAB sparse matrix format,
though there are possibly more efficient ways to leverage the
sparsity, see \cite{oleary2013} for an example. In what follows,
we assume that MATLAB handling of sparse matrix-matrix multiplication
is optimal in the sense of operation count. Another
simplifying assumption we make is that $l=k$ and that ${\boldsymbol \Phi}$ is
full rank, i.e. $q = k$.

Each iteration of the algorithm is dominated by the cost of
forming the Jacobian, ${\mathbf J}$, and solving for the update, ${\boldsymbol \delta}$.
We have that ${\mathbf J} \in \C^{mn\times k}$ and ${\mathbf M}\in \C^{k\times k}$ so
that the solve for ${\boldsymbol \delta}$ is $\bigo (k^2mn)$ using standard
linear least squares methods, e.g. a QR factorization.
We will consider the cost of computing ${\mathbf J}$ in four steps: \\

\begin{enumerate}
\item the cost of the SVD of ${\boldsymbol \Phi}$,
\item forming ${\mathbf B}$ and ${\mathbf P}$,
\item applying formula \cref{eq:jac1blocked},
\item and applying formula \cref{eq:jac2blocked}. \\
\end{enumerate}

For step 1, the
SVD of ${\boldsymbol \Phi}$ costs $\bigo (k^2m)$ to compute with standard
methods. In step 2, ${\mathbf B}$ and ${\mathbf P}$ are formed via matrix-matrix
multiplications which are $\bigo( kmn)$. Note that steps 1
and 2 are computed once.

In step 3, the order of operations is more important.
We rewrite \cref{eq:jac1blocked} as

\begin{equation}
  {\mathbb P}_{ \Phi}^\perp \dfrac{\partial {\boldsymbol \Phi}}{\partial \alpha_j}
  {\boldsymbol \Phi}^\dagger {\mathbf H} = \left ( {\mathbf D}_j
  - {\mathbf U} \left ( {\mathbf U}* {\mathbf D}_j \right )  \right ) {\mathbf B} \; ,
\end{equation}
where the parentheses determine the order of the matrix
multiplications. Forming ${\mathbf U}^*{\mathbf D}_j$ costs $\bigo (km)$
and is itself sparse with one nonzero column. The
cost of forming ${\mathbf U}({\mathbf U}^*{\mathbf D}_j)$ is then again $\bigo (km)$
and is sparse with one nonzero column. Finally,
${\mathbf D}_j - {\mathbf U}({\mathbf U}^*{\mathbf D}_j)$ is still sparse with one nonzero column
so that the last multiplication giving $({\mathbf D}_j - {\mathbf U}({\mathbf U}^*{\mathbf D}_j)){\mathbf B}$
costs $\bigo (mn)$. Repeating these calculations for
each column is then $\bigo (k^2 m + kmn)$ in total.

In step 4, the order of operations are again important.
We rewrite \cref{eq:jac2blocked} as

\begin{equation}
    \left ({\mathbb P}_{ \Phi}^\perp \dfrac{\partial {\boldsymbol \Phi}}{\partial \alpha_j}
  {\boldsymbol \Phi}^\dagger \right )^* {\mathbf H} = {\mathbf U} ({\boldsymbol \Sigma}^{-1} ({\mathbf V}^*({\mathbf D}_j^* {\mathbf P}))) \; .
\end{equation}
Because ${\mathbf D}_j^*$ has one nonzero row, forming ${\mathbf D}_j^*{\mathbf P}$ is
$\bigo (mn)$ and the result has one nonzero row. Similarly,
it then costs $\bigo (kn)$ to form ${\mathbf V}^*({\mathbf D}_j^* {\mathbf P})$ but the result
is a full matrix of size $k \times n$. The product
${\boldsymbol \Sigma}^{-1}({\mathbf V}^*({\mathbf D}_j^*{\mathbf P}))$ simply scales the rows, which costs
$\bigo(kn)$. Finally, forming ${\mathbf U}({\boldsymbol \Sigma}^{-1}({\mathbf V}^*({\mathbf D}_j^*{\mathbf P})))$
is a dense matrix-matrix multiplication which costs
$\bigo(kmn)$. Repeating these calculations for each column
is then $\bigo (k^2n + k^2mn)$ in total. This unfavorable
scaling, when compared with that of step 3, is part of the
appeal of using the approximation \cref{eq:kauf}.

In the discussion of the optimized DMD, we will take for granted 
the existence of an algorithm for solving \cref{eq:snllsmrhs}.
See, for instance, the original Fortran implementation (follow
the URL in \cite{golub1979}). We have also prepared a 
MATLAB implementation for the computations in this manuscript
\cite{askham2017optdmd}.

\subsubsection{Inverse differential equations}
\label{sec:invdiffeq}

In \cite{golub1979}, it is observed that the inverse differential
equations problem can be phrased as a nonlinear least squares
problem with multiple right hand sides. Suppose that ${\mathbf z}(t) \in \C^n$
is the solution of 

\begin{equation}
  \dot{ {\mathbf z}}(t) = {\mathbf A} {\mathbf z}(t) \; ,
\end{equation}
with the initial condition ${\mathbf z}(0) = {\mathbf z}_0$. The solution of this
problem is known analytically, 

\begin{equation}
  {\mathbf z}(t) = e^{{\mathbf A}t} {\mathbf z}_0 \; ,
\end{equation}
where we have used the matrix exponential. The inverse linear 
differential equations problem is to find ${\mathbf A}$ given ${\mathbf z}(t_i)$ for
$m \geq n$ sample times $t_i$. Note that this problem is the
natural extension of the DMD to data with arbitrary sample
times.

If we assume that ${\mathbf A}$ is 
diagonalizable, we can write

\begin{equation}
  {\mathbf z}(t) = e^{{\mathbf A}t} {\mathbf z}_0 = e^{{\mathbf S}{\boldsymbol \Lambda} t {\mathbf S}^{-1}}{\mathbf z}_0 = {\mathbf S} e^{{\boldsymbol \Lambda} t} {\mathbf S}^{-1} {\mathbf z}_0 \; ,
\end{equation}
where ${\mathbf A} = {\mathbf S}{\boldsymbol \Lambda} {\mathbf S}^{-1}$ and ${\boldsymbol \Lambda}$ is diagonal. 
Let the diagonal values of ${\boldsymbol \Lambda}$ be given by 
${ \alpha}_1, \ldots , { \alpha}_k$ and define nonlinear
basis functions by $\phi_j({\boldsymbol \alpha},t) = \exp({\alpha_jt})$.
If we let ${\boldsymbol \Phi}({\boldsymbol \alpha})$ and ${\mathbf H}$ be defined as above, with 
$\eta_p(t_i) = { z}_p(t_i)$, then 

\begin{equation}
  {\mathbf H} = {\boldsymbol \Phi}({\boldsymbol \alpha}) {\mathbf B} \; ,
\end{equation}
where 

\begin{equation}
  { B}_{i,j} = { S}_{j,i} ({\mathbf S}^{-1}{\mathbf z}_0)_i \;
\end{equation}
are the entries of ${\mathbf B}$. Therefore, the inverse differential
equations problem can be solved by first solving 

\begin{equation}
  \mbox{minimize} \| {\mathbf H} - {\boldsymbol \Phi}({\boldsymbol \alpha}) {\mathbf B} \|_F
  \qquad \mbox{ over } {\boldsymbol \alpha} \in \C^n, {\mathbf B} \in \C^{n\times n} \; .
\end{equation}
Note that $k=l=n$ in this application.
The matrix ${\mathbf A}$ can then be recovered by observing that
the $i$th column of ${\mathbf B}^\intercal$ is an eigenvector of 
${\mathbf A}$ corresponding to the eigenvalue ${ \alpha}_i$.

\begin{remark}
  We note that the variable projection framework also
applies immediately to the case that $n > l$, i.e. to
the case of fitting an $l$ dimensional linear
system to data in a higher dimensional space. 
The first
algorithm presented in \cref{sec:algorithm} is
the direct result of this observation.
\end{remark}

\begin{remark} \label{rmk:confeig}
When ${\boldsymbol \alpha}$ contains confluent (or nearly confluent)
eigenvalues, the matrix ${\boldsymbol \Phi}({\boldsymbol \alpha})$ will not be 
full rank (or nearly not full rank).
For the case that ${\mathbf A}$ truly has confluent
(or nearly confluent) eigenvalues, the algorithm will suffer
near the solution. In particular, it is difficult
to try to approximate dynamics arising from a system
with a non-diagonalizable matrix ${\mathbf A}$ using exponentials
alone.
For the purposes of generalizing this method to a larger
class of ODE systems, the decomposition proposed as part of 
``Method 18'' in \cite{moler1978}
for computing the matrix exponential of a matrix ${\mathbf C}$ offers 
an interesting alternative. In Method 18, ${\mathbf C}$ is decomposed 
as ${\mathbf C} = {\mathbf S}{\mathbf B}{\mathbf S}^{-1}$, where the matrix ${\mathbf B}$ is block-diagonal, 
with each block upper-triangular. Intuitively, the blocks 
are selected so that nearly-confluent eigenvalues are grouped 
together and the condition number of ${\mathbf S}$ is kept manageable.
If we allow ${\boldsymbol \Lambda}$ to be block-diagonal with upper-triangular
blocks, then the algorithm for inverse linear
systems above could accommodate all matrices.
In this case, we may have that $k > l$ and the software will be
significantly more complicated. This is the subject of ongoing 
research and progress will be reported at a later date.
\end{remark}

\subsubsection{Modern variable projection} \label{subsec:modvarpro}

The idea at the core of variable projection,
reducing the number of unknowns in a minimization
problem by exploiting special structure,
is not limited in application to unconstrained
nonlinear least squares problems. We will not attempt
a review of this broad subject here but will
point to the applications of
\cite{oleary2013,bell2008,aravkin2012,shearer2013}
for a sense of the types of problems which can
be approached. Among the applications are
exponential data fitting with constraints,
blind deconvolution, and multiple kernel learning.
Because of this flexibility, we believe that
rephrasing the DMD as a problem in the variable
projection framework will provide opportunities
for extensions of the DMD, including constrained
and robust versions.

In the recent paper \cite{aravkin2016},
Aravkin et al. develop a variable projection method
for an interesting variation
on the exponential fitting problem.
Let ${\boldsymbol \Phi}({\boldsymbol \alpha}) \in \C^{m\times k}$ be made up of columns
of exponentials, i.e. $\phi_j({\boldsymbol \alpha},t) = \exp({\alpha_j t})$,
as in the previous section and consider a single stream
of data ${\boldsymbol \eta} \in \C^m$. The appropriate number, $k$,
of different exponentials to use to approximate the data
may be difficult to ascertain a priori.
Instead of choosing the correct number ahead of time,
one can choose a large $k$ and augment the standard
nonlinear least squares problem with a sparsity
prior, resulting in the problem

\begin{equation}
  \mbox{minimize } f({\boldsymbol \alpha},{\boldsymbol \beta}) = \|{\boldsymbol \eta} -{\boldsymbol \Phi}({\boldsymbol \alpha}){\boldsymbol \beta} \|_2^2 +
  \| {\boldsymbol \beta} \|_1 \mbox{ over } {\boldsymbol \alpha} \in \C^k, {\boldsymbol \beta} \in \C^k \; .
\end{equation}
For a fixed ${\boldsymbol \alpha}$, the problem in ${\boldsymbol \beta}$ alone
can be solved using any suitable least absolute
shrinkage and selection operator (LASSO)
algorithm.
In \cite{aravkin2016}, the function

\begin{equation}
  \tilde{f}({\boldsymbol \alpha}) = \min_{\boldsymbol \beta} f({\boldsymbol \alpha},{\boldsymbol \beta})
\end{equation}
is shown to be differentiable under suitable
conditions and a formula for the gradient is
derived. This can then be used to solve for the
minimizer of $\tilde{f}$ with a nonlinear optimization
routine.

The DMD and optimized DMD face similar issues when it comes
to determining the appropriate rank $r$.
Extending the above idea to the DMD setting is work in
progress and will be reported at a later date.

\subsection{The DMD}

In this section, we will provide some details of the
DMD algorithm and discuss its computation and
properties, using the notation and definitions
of \cite{tu2013}.

\subsubsection{Exact DMD}

The exact DMD is defined for pairs of 
data $\{ ({\mathbf x}_1,{\mathbf y}_1), \ldots , ({\mathbf x}_m,{\mathbf y}_m) \}$ which we 
assume satisfy ${\mathbf y}_j = {\mathbf A}{\mathbf x}_j$, for some matrix ${\mathbf A}$.
Typically, the pairs are assumed to be given by
equispaced snapshots of some dynamical system
${\mathbf z}(t)$, i.e. 
${\mathbf x}_j = {\mathbf z}((j-1)\Delta t)$ and ${\mathbf y}_j = {\mathbf z}(j\Delta t)$,
but they are not required to be of this form. 
For most data sets, the matrix ${\mathbf A}$ is not
determined fully by the snapshots. Therefore,
we define the matrix ${\mathbf A}$ from the data in a least-
squares sense. In particular, we set

\begin{equation} \label{eq:exactdmd}
  {\mathbf A} = {\mathbf Y}{\mathbf X}^\dagger \; ,
\end{equation}
where ${\mathbf X}^\dagger$ is the pseudo-inverse of ${\mathbf X}$.
The matrix ${\mathbf A}$ above is the 
minimizer of $\|{\mathbf A}{\mathbf X}-{\mathbf Y}\|_F$ in the case that ${\mathbf A}{\mathbf X}={\mathbf Y}$ is
over-determined and the minimum norm ($\|{\mathbf A}\|_F$) solution
of ${\mathbf A}{\mathbf X}={\mathbf Y}$ in the case that the equation is under-determined
\cite{tu2013}
($\| \cdot \|_F$ denotes the standard Frobenius norm).
We may say that ${\mathbf A}$ is the best fit linear system
mapping ${\mathbf X}$ to ${\mathbf Y}$ or, in the typical application,
the best fit linear map which advances ${\mathbf z}(t)$
to ${\mathbf z}(t+\Delta t)$ (this map is sometimes referred
to as a forward propagator).

The dynamic mode decomposition is then
defined to be the set of eigenvectors
and eigenvalues of ${\mathbf A}$. \Cref{algo:exactdmd}
provides a robust method for
computing these values \cite{tu2013}.

\begin{algorithm} 
  \caption{Exact DMD \cite{tu2013}}
  \label{algo:exactdmd}
\begin{enumerate}
  \item Define matrices ${\mathbf X}$ and ${\mathbf Y}$ from the data:
    \begin{equation} {\mathbf X} = \left ( {\mathbf x}_1, \ldots, {\mathbf x}_m \right ) \;, 
    \qquad {\mathbf Y} = \left ( {\mathbf y}_1, \ldots, {\mathbf y}_m \right ) \; . \end{equation}
  \item Take the (reduced) SVD of the matrix ${\mathbf X}$,
    i.e. compute ${\mathbf U}$, ${\boldsymbol \Sigma}$, and ${\mathbf V}$ such that
    \begin{equation} {\mathbf X} = {\mathbf U} {\boldsymbol \Sigma} {\mathbf V}^* \; ,\end{equation}
    where ${\mathbf U} \in \C^{n\times r}$, ${\boldsymbol \Sigma} \in \C^{r\times r}$,
    and ${\mathbf V} \in \C^{m\times r}$, with $r$ the rank of ${\mathbf X}$.
  \item Let $\tilde{{\mathbf A}}$ be defined by 
    \begin{equation} \label{eq:exactatilde}
      \tilde{{\mathbf A}} = {\mathbf U}^*{\mathbf Y}{\mathbf V}{\boldsymbol \Sigma}^{-1} \; .\end{equation}
  \item Compute the eigendecomposition of $\tilde{{\mathbf A}}$,
    giving a set of $r$ vectors, ${\mathbf w}$, and eigenvalues, 
    $\lambda$, such that 
    \begin{equation} \tilde{{\mathbf A}} {\mathbf w} = \lambda {\mathbf w} \; . \end{equation}
  \item For each pair $(w,\lambda)$, we have a 
    DMD eigenvalue, $\lambda$ itself, and a DMD 
    mode defined by
    \begin{equation} {\boldsymbol \varphi} = \dfrac1{\lambda} {\mathbf Y}{\mathbf V}{\boldsymbol \Sigma}^{-1} {\mathbf w} 
      \; . \label{eq:exactmode} \end{equation}
\end{enumerate}
\end{algorithm}

%
%
%
\begin{remark} \label{rmk:confeig2}
  We note that, as a mathematical matter, the matrix $\tilde{{\mathbf A}}$
  defined in \cref{eq:exactatilde}
  may not have an eigendecomposition. In this case, the Jordan 
  decomposition may be substituted for the eigendecomposition 
  and the modes which correspond to a single Jordan block
  would be considered interacting modes.
  The Jordan decomposition, however, presents severe
  numerical difficulties \cite{golub1976} and its computation
  should be avoided. 
  For a matrix without an eigendecomposition, standard eigenvalue
  decomposition algorithms will likely return 
  a result but the matrix of eigenvectors may be ill-conditioned.
  While there are some obvious alternative decompositions in
  this case, it is
  unclear which is the best alternative. The Schur decomposition
  is stable and
  would provide an orthogonal set of DMD modes but all modes 
  would interact. The block-diagonal Schur decomposition described
  in \cref{rmk:confeig} could again provide an interesting
  alternative decomposition. 
\end{remark}

\begin{remark} \label{rmk:modenorm}
  In our implementation of the exact DMD (for the computations
in \cref{sec:examples}), we use a different normalization than
that in the definition of the DMD modes \cref{eq:exactmode}. 
We set 
\begin{equation} 
{\boldsymbol \varphi} = \dfrac{1}{\|{\mathbf Y}{\mathbf V}{\boldsymbol \Sigma}^{-1} {\mathbf w} \|_2} {\mathbf Y}{\mathbf V}{\boldsymbol \Sigma}^{-1} {\mathbf w}
\end{equation}
so that the DMD modes have unit norm.
\end{remark}

\subsubsection{Low rank structure and the DMD} \label{sec:lowrank}

When computing the DMD using \cref{algo:exactdmd},
the SVD of the data is typically
truncated to avoid fitting dynamics to the lowest energy
modes, which may be corrupted by noise. The decision
of where and how to truncate can have a significant
effect on the resulting DMD modes and eigenvalues
and can vary depending on the needs of a given
application. We will focus here on so-called
\textit{hard-thresholding}, where the largest
$r$ singular values are maintained and the rest
are set to zero.

For certain applications in optimized control,
the low energy modes of a system have been found to
be important for balanced input-output models
\cite{rowley2005,rowley2006linear,rowley2006dynamics,ilak2008}.
The data in these settings typically comes from
numerical simulations, which are generally less
polluted with noise than measured data.
In this case, it may be reasonable to choose
a large $r$ for the hard threshold. 

For applications with significant measurement
error (or other sources of error), the question
of how best to truncate is difficult to answer.
Often, a heuristic choice is made, e.g. looking
for ``elbows'' in the singular value decomposition
of the data or keeping singular values up to a
certain percentage of the nuclear norm (so that
the sum of the $r$ singular values which are kept
is at least a certain percentage of the
sum of all singular values).

In the case that the measurement error is additive
white noise, the recent work of Gavish and Donoho,
\cite{gavish2014}, suggests an algorithmic choice
for the truncation.
When the standard deviation of the noise is known,
there is an analytical formula for the optimal
cut-off \cite{gavish2014}. More realistically,
the noise level must be estimated and an
alternative formula based on the median
singular value of the data is available
\cite{gavish2014}. This has the
disadvantage that at least half of the singular
values must be computed, which may be expensive for
large data sets.

Following up on the last point, when only a modest
number of singular values and vectors out of the
total are required, randomized methods for computing
the SVD can significantly reduce the cost over computing
the full SVD. Such methods are based on applying
the data matrix to a small set of random vectors
($r+p$ random vectors with $p\approx 15$ are used
to compute $r$ singular values and singular vectors)
and then computing a SVD of reduced size. Randomized
methods of this flavor are becoming increasingly
important in data analysis and dense linear algebra,
see \cite{halko2011} for a review. For an application
in the DMD setting, see \cite{erichson2016}.

\subsubsection{System reconstruction in the DMD basis}

\label{sec:reconstruction}

Let ${\mathbf z}_j = {\mathbf z}(j \Delta t)$ be snapshots of a system,
${\mathbf X} = ({\mathbf z}_0,{\mathbf z}_1, \ldots,
{\mathbf z}_m)$, and $({\boldsymbol \varphi}_i,\lambda_i)$
be $r$ DMD mode-eigenvalue pairs computed via
\cref{algo:exactdmd}, with the ${\boldsymbol \varphi}$ normalized as
in \cref{rmk:modenorm}. In many applications, it is of
interest to reconstruct the system, i.e. to compute
coefficients ${ b}_i$ so that

\begin{equation} \label{eq:reconstruct}
  {\mathbf z}_j \approx \sum_{i=1}^r { b}_i {\boldsymbol \varphi}_i \lambda_i^j \; .
\end{equation}
For example, it is then possible to extrapolate
a guess as to the future state of the system using the
formula

\begin{equation} \label{eq:extrap}
  {\mathbf z}(t) \approx \sum_{i=1}^r b_i {\boldsymbol \varphi}_i e^{\log(\lambda_i) t/\Delta t} \; .
\end{equation}

The expression \cref{eq:reconstruct} suggests the following
minimization problem for the coefficients

\begin{equation} \label{eq:mincoeffs}
  \mbox{minimize} \ \left \| {\mathbf X}
  - \begin{pmatrix}
    \vert & \vert & \\
    {\boldsymbol \varphi}_1 & {\boldsymbol \varphi}_2 & \cdots \\
    \vert & \vert & 
  \end{pmatrix} \mbox{diag}({\mathbf b})
  \begin{pmatrix}
    1 & \lambda_1 & \cdots & \lambda_1^m \\
    1 & \lambda_2 & \cdots & \lambda_2^m \\
    \vdots & \vdots & \ddots & \vdots
  \end{pmatrix} \right \|_F
  \mbox{ over } {\mathbf b} \in \C^r \; .
\end{equation}
The problem \cref{eq:mincoeffs} may be solved
with linear-algebraic methods, see \cite{jovanovic2014}
for details.
For efficiency, the coefficients may be computed
based on the first snapshot alone \cite{kutz2016}, i.e.
setting ${\mathbf b}$ as the solution of

\begin{equation}
  \mbox{minimize} \ \left \| {\mathbf z}_0
  - \begin{pmatrix}
    \vert & \vert & \\
    {\boldsymbol \varphi}_1 & {\boldsymbol \varphi}_2 & \cdots \\
    \vert & \vert & 
  \end{pmatrix} {\mathbf b} \right \|_2
  \mbox{ over } {\mathbf b} \in \C^r \; ,
\end{equation}
which can be computed using standard
linear least squares methods. In many
settings, the coefficients recovered in
this manner will be of sufficient accuracy.

The sparsity-promoting DMD method of
\cite{jovanovic2014} minimizes an objective
function similar to that of problem
\cref{eq:mincoeffs}, but augmented with a
sparsity prior, i.e. the problem

\begin{equation} \label{eq:mincoeffssparse}
  \mbox{minimize} \ \left \| {\mathbf X}
  - \begin{pmatrix}
    \vert & \vert & \\
    {\boldsymbol \varphi}_1 & {\boldsymbol \varphi}_2 & \cdots \\
    \vert & \vert & 
  \end{pmatrix} \mbox{diag}({\mathbf b})
  \begin{pmatrix}
    1 & \lambda_1 & \cdots & \lambda_1^m \\
    1 & \lambda_2 & \cdots & \lambda_2^m \\
    \vdots & \vdots & \ddots & \vdots
  \end{pmatrix} \right \|_F \, + \gamma \|{\mathbf b}\|_1
  \mbox{ over } {\mathbf b} \in \C^r \; ,
\end{equation}
where $\gamma$ is a parameter chosen to
control the number of nonzero terms in ${\mathbf b}$.
This results in a parsimonious representation.

If the goal is the best reconstruction possible
for the given time dynamics,
then it seems that the DMD modes as returned
by the exact DMD may be ignored. A high-quality
reconstruction is given by

\begin{equation} \label{eq:extrap2}
  {\mathbf z}(t) \approx \sum_{i=1}^r {\boldsymbol \psi}_i e^{\log(\lambda_i) t/\Delta t} \; ,
\end{equation}
where the ${\boldsymbol \psi}_i$ solve 

\begin{equation} \label{eq:mincoeffs2}
  \mbox{minimize} \ \left \| {\mathbf X}
  - \begin{pmatrix}
    \vert & \vert & \\
    {\boldsymbol \psi}_1 & {\boldsymbol \psi}_2 & \cdots \\
    \vert & \vert & 
  \end{pmatrix}
  \begin{pmatrix}
    1 & \lambda_1 & \cdots & \lambda_1^m \\
    1 & \lambda_2 & \cdots & \lambda_2^m \\
    \vdots & \vdots & \ddots & \vdots
  \end{pmatrix} \right \|_F
  \mbox{ over } {\boldsymbol \psi}_1,\ldots,{\boldsymbol \psi}_r \in \C^n \; 
\end{equation}
and are computable with standard linear least squares
methods. This does not result in a parsimonious
representation, as in \cite{jovanovic2014}, but the
fit will be optimal.

\begin{remark}
  Upon examining equation \cref{eq:extrap}, it is clear
that the dynamic mode decomposition is purely a method of 
fitting exponentials to data. This is equivalent to recovering 
the eigendecomposition of the underlying linear system in the 
case that that linear system is diagonalizable. In the case
that there are transient dynamics which are not captured by a purely
diagonal system, the extensions mentioned in
\cref{rmk:confeig,rmk:confeig2}
provide an alternative. Of course,
exponentials with similar exponents can mimic terms of
the form $t\exp( \alpha t)$, but there may be lots of cancellation
in the intermediate calculations.
\end{remark}

\subsubsection{Bias of the DMD} \label{subsec:bias}

The papers \cite{hemati2015,dawson2016} deal with the
question of bias in the computed DMD modes and
eigenvalues when data is corrupted by sensor noise.
In the case of additive white noise in the
measurements, there are formulas for the bias
associated with the exact DMD algorithm
\cite{dawson2016}. If $m$ is the number of snapshots
and $n$ is the dimension of the system, the bias
will be the dominant component of the DMD error whenever
$\sqrt{m} R_{STN} > \sqrt{n}$, where $R_{STN}$
is the signal-to-noise ratio. For the purpose
of avoiding this pitfall, there are
a number of alternative, debiased algorithms.
We'll present two of them here: the fbDMD (forward-backward
DMD) and tlsDMD (total least-squares DMD).

Let ${\mathbf X}$ and ${\mathbf Y}$ be as in the exact DMD and let
${\mathbf U}_{ X}{\boldsymbol \Sigma}_{ X} {\mathbf V}^*_{ X} = {\mathbf X}$ and ${\mathbf U}_{ Y} {\boldsymbol \Sigma}_{ Y} {\mathbf V}^*_{ Y} = {\mathbf Y}$
be (reduced) SVDs of these matrices. The debiased
algorithms will follow the steps of the exact DMD
and will only differ in the definition of $\tilde{{\mathbf A}}$.

Intuitively, the fbDMD method can be thought of as
a correction to the one-directional preference of the
exact DMD. We define two matrices

\begin{equation}
  \tilde{{\mathbf A}}_f = {\mathbf U}_{ X}^* {\mathbf Y} {\mathbf V}_{ X} {\boldsymbol \Sigma}_{ X}^{-1}
\end{equation}
and

\begin{equation}
  \tilde{{\mathbf A}}_b = {\mathbf U}_{ Y}^* {\mathbf X} {\mathbf V}_{ Y} {\boldsymbol \Sigma}_{ Y}^{-1}
\end{equation}
which represent forward and backward propagators
for the data in the same manner as the exact DMD.
The matrix given by

\begin{equation}
  \tilde{{\mathbf A}} = \left (\tilde{{\mathbf A}}_f \tilde{{\mathbf A}}_b^{-1} \right)^{1/2}
\end{equation}
is then a debiased estimate of the forward
propagator \cite{dawson2016}.

Because of the
nonuniqueness of the square root, some care
must be taken in the calculation of $\tilde{{\mathbf A}}$.
In particular, the eigenvalues of $\tilde{{\mathbf A}}$
are only determined up to a factor of $\pm 1$
by the square root. Dawson et al. recommend
choosing the square root which is closest
to $\tilde{{\mathbf A}}_f$ in norm. Na\"{i}vely this is
a $\bigo(2^r)$ calculation, where $r$ is the
number of eigenvalues, and it is unclear how
to improve on this scaling.

The tlsDMD method attempts to correct for the
fact that noise on ${\mathbf X}$ and noise on ${\mathbf Y}$ are not
treated in the same way by the exact DMD. First, we
project ${\mathbf X}$ and ${\mathbf Y}$ onto $r < m/2$ POD modes,
obtaining $\tilde{{\mathbf X}}$ and $\tilde{{\mathbf Y}}$. We define

\begin{equation}
  {\mathbf Z} = \begin{pmatrix} \tilde{{\mathbf X}} \\ \tilde{{\mathbf Y}} 
  \end{pmatrix} \; 
\end{equation}
and compute its (reduced) SVD, ${\mathbf U}_{ Z} {\boldsymbol \Sigma}_{ Z} {\mathbf V}_{ Z}^* = {\mathbf Z}$.
Let ${\mathbf U}_{11} = {\mathbf U}_{ Z}(1 :  r,1 :  r)$ and
${\mathbf U}_{21} = {\mathbf U}_{ Z}(r+1 :  2r,1 :  r)$.
Then the matrix given by

\begin{equation}
  \tilde{{\mathbf A}} = {\mathbf U}_{21}{\mathbf U}_{11}^{-1}
\end{equation}
is a debiased estimate of the forward
propagator \cite{dawson2016}. This definition is distinct
from but similar in spirit to the definition
of \cite{hemati2015}.

\begin{remark}
  We note that, as described above, the
  forward-backward DMD is somewhat approximate
  in that $\tilde{{\mathbf A}}_f$ and $\tilde{{\mathbf A}}_b$ are
  representations of the underlying operator
  which are projected onto different subspaces.
  Let ${\mathbf A}$ denote the underlying linear
  operator and ${\mathbf S}_f = {\mathbf Y}{\mathbf V}_{ X}{\boldsymbol \Sigma}_{ X}^{-1}$
  and ${\mathbf S}_b = {\mathbf X}{\mathbf V}_{ Y}{\boldsymbol \Sigma}_{ Y}^{-1}$. Then ${\mathbf A}_f  = {\mathbf S}_f \tilde{{\mathbf A}}_f
  {\mathbf S}_f^\dagger$ and ${\mathbf A}_b = {\mathbf S}_b \tilde{{\mathbf A}}_b {\mathbf S}_b^\dagger$
  are both approximations of ${\mathbf A}$, but
  $\tilde{{\mathbf A}}_f$ and $\tilde{{\mathbf A}}_b$ are not
  necessarily good approximations of each
  other. We recommend instead computing
  the full approximations to ${\mathbf A}_f$ and ${\mathbf A}_b$
  for the data projected onto the first
  $r$ POD modes. And using the approximation
  ${\mathbf A}_{fb} = ({\mathbf A}_f {\mathbf A}_b^{-1})^{1/2}$ for the
  linear operator projected onto those modes.
  For the calculations in \cref{sec:examples},
  we used this modification to the fbDMD.
\end{remark}

In \cref{sec:examples}, we compare the
behavior of these methods with the optimized DMD
for data with sensor noise.

\section{The optimized DMD} \label{sec:algorithm}

In this section, we will combine ideas from variable
projection and the DMD literature to obtain a pair of 
debiased algorithms which can compute the DMD for 
data with arbitrary sample times. We will then 
demonstrate some of the properties of this algorithm 
in the following section.

\subsection{Algorithm}

Let ${\mathbf X} = ({\mathbf z}_0,\ldots,{\mathbf z}_m)$ be a matrix of snapshots,
with ${\mathbf z}_j = {\mathbf z}(t_j) \in \C^n$ for a set of times $t_j$. 
For a target rank $r$ determined by the user, assume
that the data is the solution of a linear system of
differential equations, restricted to a subspace
of dimension $r$. I.e., assume that 

\begin{equation} \label{eq:rankrlinsys}
  {\mathbf z}(t) \approx {\mathbf S} e^{{\boldsymbol \Lambda} t} {\mathbf S}^\dagger {\mathbf z}_0 \; ,
\end{equation}
where ${\mathbf S} \in \C^{n\times r}$ and ${\boldsymbol \Lambda} \in \C^{r\times r}$.
As in \cref{sec:invdiffeq}, we may rewrite
\cref{eq:rankrlinsys} as

\begin{equation}
{\mathbf X}^\intercal \approx {\boldsymbol \Phi}({\boldsymbol \alpha}) {\mathbf B} \; ,
\end{equation}
where

\begin{equation}
{ B}_{i,j} = { S}_{j,i} \left ({\mathbf S}^\dagger {\mathbf z}_0 \right)_i
\end{equation}
and ${\boldsymbol \Phi}({\boldsymbol \alpha}) \in \C^{(m+1)\times r}$ with entries defined
by ${ \Phi}({\boldsymbol \alpha})_{i,j} = \exp (\alpha_j t_i)$.

The preceding leads us to the following definition
of the optimized DMD in terms of an exponential fitting
problem. Suppose that
$\hat{{\boldsymbol \alpha}}$ and $\hat{{\mathbf B}}$ solve 

\begin{equation} \label{eq:optdmdprob}
  \mbox{minimize} \| {\mathbf X}^\intercal - {\boldsymbol \Phi}({\boldsymbol \alpha}) {\mathbf B} \|_F
  \qquad \mbox{ over } {\boldsymbol \alpha} \in \C^k, {\mathbf B} \in \C^{l\times n} \; .
\end{equation}
The optimized DMD eigenvalues are then defined by
$\lambda_i = \hat{{ \alpha}}_i$ and the eigenmodes are
defined by 

\begin{equation}
  {\boldsymbol \varphi}_i = \dfrac{1}{\|\hat{{\mathbf B}}^\intercal( : ,i) \|_2}
  \hat{{\mathbf B}}^{\intercal}( : ,i) \; ,
\end{equation}
where $\hat{{\mathbf B}}^\intercal( : ,i)$ is the $i$-th column
of $\hat{{\mathbf B}}^\intercal$. We summarize the above definition
as \cref{algo:optdmd}; note that this definition
of the optimized DMD is morally equivalent to that
presented in \cite{chen2012}.

If we set $b_i = \|\hat{{\mathbf B}}^\intercal( : ,i)\|_2$, then 

\begin{equation} \label{eq:reconstructu}
  \tilde{{\mathbf z}_j} = \sum_{i=1}^r b_i e^{\lambda_i t_j} {\boldsymbol \varphi}_i
\end{equation}
is an approximation to ${\mathbf z}_j$ for each $j = 0, \ldots, m$. 
Therefore, if $\hat{{\mathbf B}}$ and $\hat{{\boldsymbol \alpha}}$ are computed,
the question of system reconstruction in the optimized DMD basis
is trivial.

\begin{algorithm} 
  \caption{Optimized DMD}
  \label{algo:optdmd}
  \begin{enumerate}
  \item Let the snapshot matrix ${\mathbf X}$ and an initial guess for 
    ${\boldsymbol \alpha}$ be given.
  \item Solve the problem
    \begin{equation} \label{eq:algoprobfull}
      \mbox{minimize} \| {\mathbf X}^\intercal - {\boldsymbol \Phi}({\boldsymbol \alpha}) {\mathbf B} \|_F
      \qquad \mbox{ over } {\boldsymbol \alpha} \in \C^k, {\mathbf B} \in \C^{l\times n} \; ,
    \end{equation}
    using a variable projection algorithm.
  \item Set $\lambda_i = \hat{{ \alpha}}_i$ and
    \begin{equation}
      {\boldsymbol \varphi}_i = \dfrac{1}{\|\hat{{\mathbf B}}^\intercal( : ,i) \|_2}
      \hat{{\mathbf B}}^{\intercal}( : ,i) \; ,    
    \end{equation}
    saving the values $b_i = \| \hat{{\mathbf B}}^\intercal( : ,i)\|_2$.
  \end{enumerate}
\end{algorithm}

\begin{remark}
  We note that, given a single initial guess for 
  ${\boldsymbol \alpha}$, the Levenberg-Marquardt
  algorithm will not necessarily converge to the 
  global minimizer of \cref{eq:optdmdprob}. It is
  therefore technically incorrect to claim that 
  \cref{algo:optdmd} will always compute the optimized
  DMD of a given set of snapshots. Indeed, it may be
  that the proper way to view \cref{algo:optdmd,algo:approxoptdmd} 
  is as post-processors for the initial guess for 
  ${\boldsymbol \alpha}$, which improve on 
  ${\boldsymbol \alpha}$ by computing a nearby local
  minimizer. In \cref{sec:examples}, we
  see that this post-processing --- even when it is
  unclear whether we've computed the global minimizer ---
  provides significant improvement over other DMD
  methods.
\end{remark}

The asymptotic cost of \cref{algo:optdmd} can be
estimated using the formulas from
\cref{subsec:mrhs}. For each iteration of the variable
projection algorithm, the cost is $\bigo (r^2 mn)$.
For large $m$ and $n$, it is possible to compute the
optimized DMD (or an approximation of the optimized DMD)
more efficiently. Suppose that instead of computing
$\hat{{\boldsymbol \alpha}}$ and $\hat{{\mathbf B}}$ which solve \cref{eq:optdmdprob},
you computed $\breve{{\boldsymbol \alpha}}$ and $\breve{{\mathbf B}}$ which solve
\begin{equation} \label{eq:optdmdprobtrunc}
  \mbox{minimize} \| {\mathbf X}_r^\intercal - {\boldsymbol \Phi}({\boldsymbol \alpha}) {\mathbf B} \|_F
  \qquad \mbox{ over } {\boldsymbol \alpha} \in \C^k, {\mathbf B} \in \C^{l\times n} \; ,
\end{equation}
where ${\mathbf X}_r$ is the optimal rank $r$ approximation
of ${\mathbf X}$ (in the Frobenius norm). \Cref{algo:approxoptdmd}
computes the solution to this problem.

\begin{algorithm}
\caption{Approximate optimized DMD}
  \label{algo:approxoptdmd}
  \begin{enumerate}
  \item Let the snapshot matrix ${\mathbf X}$ and an initial
    guess for ${\boldsymbol \alpha}$ be given.
  \item Compute the truncated SVD of ${\mathbf X}$ of
    rank $r$, i.e. compute ${\mathbf U}_r \in \C^{n\times r}$
    ${\boldsymbol \Sigma}_r \in \C^{r\times r}$, and
    ${\mathbf V}_r \in \C^{(m+1)\times r}$ such that
    \begin{equation}
      {\mathbf X}_r = {\mathbf U}_r{\boldsymbol \Sigma}_r {\mathbf V}_r^* \; .
    \end{equation}
  \item Compute $\grave{{\boldsymbol \alpha}}$ and $\grave{{\mathbf B}}$ which solve
    \begin{equation} \label{eq:algoprobtrunc}
  \mbox{minimize} \| \conj{{\mathbf V}_r}{\boldsymbol \Sigma} - {\boldsymbol \Phi}({\boldsymbol \alpha}) {\mathbf B} \|_F
  \qquad \mbox{ over } {\boldsymbol \alpha} \in \C^r, {\mathbf B} \in \C^{r\times r} \; ,
    \end{equation}
    using a variable projection algorithm.
  \item Set $\lambda_i = \grave{{ \alpha}}_i$ and

    \begin{equation}
      {\boldsymbol \varphi}_i = \dfrac1{\|{\mathbf U}_r\grave{{\mathbf B}}^\intercal( : ,i)\|_2}
      {\mathbf U}_r\grave{{\mathbf B}}^\intercal( : ,i)  \; ,
    \end{equation}
    saving the values $b_i = \|{\mathbf U}_r\grave{{\mathbf B}}^\intercal( : ,i)\|_2$.
  \end{enumerate}
\end{algorithm}

The cost of computing the rank $r$ SVD in
step 2 of \cref{algo:approxoptdmd} is
$\bigo (mn\min (m,n))$ using a standard algorithm
or $\bigo (r^2(m+n)+rmn)$ using a randomized algorithm
\cite{halko2011} (the constant is larger for the
randomized algorithm, so determining the faster
algorithm can be subtle). After this is computed
once, the cost for each step of the variable
projection algorithm is improved to $\bigo(r^3m)$.
This can lead to significant speed ups over the
original.

The following proposition shows the relation between the
minimization problem \cref{eq:optdmdprobtrunc} and 
\cref{algo:approxoptdmd}.

\begin{proposition}
  Let ${\mathbf U}_r$, ${\boldsymbol \Sigma}_r$, ${\mathbf V}_r$, $\grave{{\boldsymbol \alpha}}$, and 
$\grave{{\mathbf B}}$ be as in \cref{algo:approxoptdmd}. Then 
$\breve{{\boldsymbol \alpha}} = \grave{{\boldsymbol \alpha}}$ and 
$\breve{{\mathbf B}} = \grave{{\mathbf B}} {\mathbf U}_r^\intercal$ are solutions of 
\cref{eq:optdmdprobtrunc}. 
\end{proposition}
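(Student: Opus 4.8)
The plan is to push both minimization problems into their variable-projection form in ${\boldsymbol \alpha}$ alone and show that the two resulting projected objectives are literally equal, so that the minimizers in ${\boldsymbol \alpha}$ coincide and the inner matrices are related by the claimed factor. First I would rewrite the truncated data through its SVD. Taking the transpose of ${\mathbf X}_r = {\mathbf U}_r {\boldsymbol \Sigma}_r {\mathbf V}_r^*$ and using that ${\boldsymbol \Sigma}_r$ is real and diagonal (so ${\boldsymbol \Sigma}_r^\intercal = {\boldsymbol \Sigma}_r$ and $({\mathbf V}_r^*)^\intercal = \conj{{\mathbf V}_r}$) gives ${\mathbf X}_r^\intercal = \conj{{\mathbf V}_r}\,{\boldsymbol \Sigma}_r\,{\mathbf U}_r^\intercal$, which already isolates the factor $\conj{{\mathbf V}_r}{\boldsymbol \Sigma}_r$ that appears in \cref{eq:algoprobtrunc}.

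The key structural fact to establish is that ${\mathbf U}_r^\intercal$ has orthonormal rows. Since ${\mathbf U}_r$ has orthonormal columns we have ${\mathbf U}_r^*{\mathbf U}_r = {\mathbf I}_r$; transposing this identity and noting that $({\mathbf U}_r^\intercal)^* = \conj{{\mathbf U}_r}$ yields ${\mathbf U}_r^\intercal({\mathbf U}_r^\intercal)^* = {\mathbf U}_r^\intercal\conj{{\mathbf U}_r} = {\mathbf I}_r$. As a consequence, right multiplication by ${\mathbf U}_r^\intercal$ preserves the Frobenius norm: for any conformable ${\mathbf M}$ one has $\|{\mathbf M}{\mathbf U}_r^\intercal\|_F^2 = \operatorname{tr}\!\big({\mathbf M}{\mathbf U}_r^\intercal\conj{{\mathbf U}_r}{\mathbf M}^*\big) = \operatorname{tr}({\mathbf M}{\mathbf M}^*) = \|{\mathbf M}\|_F^2$. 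This is the single identity that does all the work.

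Next I would apply variable projection, exactly as in \cref{subsec:varpro}, to each problem. For a fixed ${\boldsymbol \alpha}$ the optimal inner matrix in \cref{eq:optdmdprobtrunc} is ${\boldsymbol \Phi}({\boldsymbol \alpha})^\dagger {\mathbf X}_r^\intercal$, so the projected residual is $\|{\mathbb P}_{\Phi}^\perp {\mathbf X}_r^\intercal\|_F = \|{\mathbb P}_{\Phi}^\perp \conj{{\mathbf V}_r}{\boldsymbol \Sigma}_r{\mathbf U}_r^\intercal\|_F$, where ${\mathbb P}_{\Phi}^\perp = {\mathbf I} - {\boldsymbol \Phi}({\boldsymbol \alpha}){\boldsymbol \Phi}({\boldsymbol \alpha})^\dagger$. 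By the norm invariance just shown, this equals $\|{\mathbb P}_{\Phi}^\perp \conj{{\mathbf V}_r}{\boldsymbol \Sigma}_r\|_F$, which is precisely the projected residual of the reduced problem \cref{eq:algoprobtrunc}. Thus the two problems have identical objective functions as functions of ${\boldsymbol \alpha}$, so they share the same set of minimizers and in particular $\breve{{\boldsymbol \alpha}} = \grave{{\boldsymbol \alpha}}$. Recovering the inner matrices from this common minimizer gives $\grave{{\mathbf B}} = {\boldsymbol \Phi}(\grave{{\boldsymbol \alpha}})^\dagger \conj{{\mathbf V}_r}{\boldsymbol \Sigma}_r$, while $\breve{{\mathbf B}} = {\boldsymbol \Phi}(\breve{{\boldsymbol \alpha}})^\dagger {\mathbf X}_r^\intercal = {\boldsymbol \Phi}(\grave{{\boldsymbol \alpha}})^\dagger \conj{{\mathbf V}_r}{\boldsymbol \Sigma}_r{\mathbf U}_r^\intercal = \grave{{\mathbf B}}\,{\mathbf U}_r^\intercal$, as claimed.

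The main thing to watch is bookkeeping with transpose versus conjugate transpose: the nonstandard appearance of $\conj{{\mathbf V}_r}$ and of ${\mathbf U}_r^\intercal$ (rather than ${\mathbf U}_r^*$) comes entirely from the transpose on ${\mathbf X}$, and the whole argument hinges on verifying $({\mathbf U}_r^\intercal)^* = \conj{{\mathbf U}_r}$ and ${\boldsymbol \Sigma}_r^\intercal = {\boldsymbol \Sigma}_r$ correctly. A secondary caveat is that if ${\boldsymbol \Phi}(\grave{{\boldsymbol \alpha}})$ lacks full column rank the norm-minimizing inner matrix is not unique; the statement should therefore be read as identifying the particular pseudoinverse solutions produced by the variable projection algorithm, and the relation $\breve{{\mathbf B}} = \grave{{\mathbf B}}{\mathbf U}_r^\intercal$ holds for those canonical choices.
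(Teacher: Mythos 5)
Your proof is correct, and it takes a genuinely different route from the paper's. The paper completes ${\mathbf U}_r$ and ${\mathbf V}_r$ to full unitary matrices, pads ${\boldsymbol \Sigma}_r$ and the candidate ${\mathbf B}$ matrices with zero blocks, and then argues by contradiction, using two-sided unitary invariance of the Frobenius norm together with a column-truncation inequality. You instead work directly with the rectangular factor: the identity ${\mathbf U}_r^\intercal \conj{{\mathbf U}_r} = {\mathbf I}_r$ makes right multiplication by ${\mathbf U}_r^\intercal$ an isometry on matrices with $r$ columns, and combining this with the variable-projection reduction shows that the projected objectives $\|{\mathbb P}_{\Phi}^\perp {\mathbf X}_r^\intercal\|_F$ and $\|{\mathbb P}_{\Phi}^\perp \conj{{\mathbf V}_r}{\boldsymbol \Sigma}_r\|_F$ coincide as functions of ${\boldsymbol \alpha}$. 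Your route buys a direct (non-contradiction) argument, avoids the unitary completion entirely, and yields slightly more than the stated claim: the two problems have the same optimal value and the same set of optimal ${\boldsymbol \alpha}$. The paper's proof, in exchange, stays entirely at the level of the joint objective in $({\boldsymbol \alpha},{\mathbf B})$ and never invokes the equivalence between the joint and projected problems. One remark: your closing caveat about rank-deficient ${\boldsymbol \Phi}(\grave{{\boldsymbol \alpha}})$ is unnecessary. Since
\begin{equation}
{\mathbf X}_r^\intercal - {\boldsymbol \Phi}(\grave{{\boldsymbol \alpha}})\grave{{\mathbf B}}\,{\mathbf U}_r^\intercal = \left ( \conj{{\mathbf V}_r}{\boldsymbol \Sigma}_r - {\boldsymbol \Phi}(\grave{{\boldsymbol \alpha}})\grave{{\mathbf B}} \right ) {\mathbf U}_r^\intercal \; ,
\end{equation}
your isometry identity applies to the residual itself, so for \emph{any} optimal pair $(\grave{{\boldsymbol \alpha}},\grave{{\mathbf B}})$ of \cref{eq:algoprobtrunc} the pair $(\grave{{\boldsymbol \alpha}},\grave{{\mathbf B}}{\mathbf U}_r^\intercal)$ achieves the common optimal value in \cref{eq:optdmdprobtrunc}; no appeal to the canonical pseudoinverse choice of $\grave{{\mathbf B}}$ is needed.
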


\begin{proof}
  Let ${\mathbf U}$ and ${\mathbf V}$ be orthogonal matrices such that their first $r$ 
columns equal ${\mathbf U}_r$ and ${\mathbf V}_r$ respectively. We note that 
\begin{align}
  \| \conj{{\mathbf V}_r} {\boldsymbol \Sigma}_r - {\boldsymbol \Phi}(\grave{{\boldsymbol \alpha}}) \grave{{\mathbf B}} \|_F
&= \| \conj{{\mathbf V}_r} \begin{bmatrix} {\boldsymbol \Sigma}_r & 0 \\ 0 & 0 
  \end{bmatrix} - {\boldsymbol \Phi}(\grave{{\boldsymbol \alpha}}) \begin{bmatrix} \grave{{\mathbf B}}
& 0 
  \end{bmatrix} \|_F \nonumber \\
&=  \| \conj{{\mathbf V}} \begin{bmatrix} {\boldsymbol \Sigma}_r & 0 \\ 0 & 0 
  \end{bmatrix} {\mathbf U}^\intercal - {\boldsymbol \Phi}(\grave{{\boldsymbol \alpha}}) 
\grave{{\mathbf B}} \begin{bmatrix} {\mathbf I}_r & 0 \end{bmatrix} {\mathbf U}^\intercal \|_F \nonumber \\
&= \| {\mathbf X}_r^\intercal - {\boldsymbol \Phi}(\breve{{\boldsymbol \alpha}}) \breve{{\mathbf B}} \|_F \; .
\end{align}

By contradiction, assume that there exist $\dot{{\boldsymbol \alpha}}$ and
$\dot{{\mathbf B}}$ such that 

\begin{equation}
  \| {\mathbf X}_r^\intercal - {\boldsymbol \Phi}(\dot{{\boldsymbol \alpha}}) \dot{{\mathbf B}}\|_F < \|{\mathbf X}_r^\intercal - 
{\boldsymbol \Phi}(\breve{{\boldsymbol \alpha}}) \breve{{\mathbf B}} \|_F \; .
\end{equation}
Then,

\begin{align}
\| \conj{{\mathbf V}_r} {\boldsymbol \Sigma}_r - {\boldsymbol \Phi}(\grave{{\boldsymbol \alpha}}) \grave{{\mathbf B}} \|_F
&> \| {\mathbf X}_r^\intercal - {\boldsymbol \Phi}(\dot{{\boldsymbol \alpha}}) \dot{{\mathbf B}}\| \nonumber \\
&= \| \conj{{\mathbf V}} \begin{bmatrix} {\boldsymbol \Sigma}_r & 0 \\ 0 & 0 
  \end{bmatrix} {\mathbf U}^\intercal - {\boldsymbol \Phi}(\dot{{\boldsymbol \alpha}}) \dot{{\mathbf B}} \|_F \nonumber \\
&= \| \conj{{\mathbf V}} \begin{bmatrix} {\boldsymbol \Sigma}_r & 0 \\ 0 & 0 
  \end{bmatrix} - {\boldsymbol \Phi}(\dot{{\boldsymbol \alpha}}) \dot{{\mathbf B}} \conj{{\mathbf U}} \|_F 
\nonumber \\
&\geq \| \conj{{\mathbf V}} \begin{bmatrix} {\boldsymbol \Sigma}_r & 0 \\ 0 & 0 
  \end{bmatrix} - {\boldsymbol \Phi}(\dot{{\boldsymbol \alpha}}) \begin{bmatrix} \dot{{\mathbf B}} \conj{{\mathbf U}_r} &
0 \end{bmatrix} \|_F \nonumber \\
&= \| \conj{{\mathbf V}_r} {\boldsymbol \Sigma}_r - {\boldsymbol \Phi}(\dot{{\boldsymbol \alpha}}) \dot{{\mathbf B}} \conj{{\mathbf U}_r} \|_F .
\end{align}
By the definition of $\grave{{\boldsymbol \alpha}}$ and $\grave{{\mathbf B}}$, we have 
that 

\begin{equation}
\| \conj{{\mathbf V}_r} {\boldsymbol \Sigma}_r - {\boldsymbol \Phi}(\dot{{\boldsymbol \alpha}}) \dot{{\mathbf B}} \conj{{\mathbf U}_r} \|_F
\geq \| \conj{{\mathbf V}_r} {\boldsymbol \Sigma}_r - {\boldsymbol \Phi}(\grave{{\boldsymbol \alpha}}) \grave{{\mathbf B}} \|_F \; ,
\end{equation}
a contradiction.
\end{proof}

The solution of the minimization problem \cref{eq:optdmdprobtrunc} 
is desirable in and of 
itself in many situations. In particular, consider the cases in 
which you replace the data ${\mathbf X}$ with ${\mathbf X}_r$ for the purpose of 
denoising the data or restricting the data to some low-dimensional 
structure (see \cref{sec:lowrank} for more). In the general 
case, the following
proposition shows that the eigenvalues and eigenmodes 
produced by \cref{algo:optdmd,algo:approxoptdmd}
will often give reconstructions of 
comparable quality. 

\begin{proposition}
  Suppose that the pair $(\breve{{\boldsymbol \alpha}},\breve{{\mathbf B}})$ is a solution
of \cref{eq:optdmdprobtrunc} and that $(\hat{{\boldsymbol \alpha}},\hat{{\mathbf B}})$
is a solution of \cref{eq:optdmdprob}. Then

\begin{equation}
  \| {\mathbf X}^\intercal - {\boldsymbol \Phi}(\breve{{\boldsymbol \alpha}}) \breve{{\mathbf B}} \|_F
\leq 2 \| {\mathbf X}^\intercal - {\mathbf X}_r^\intercal \|_F 
+ \| {\mathbf X}^\intercal - {\boldsymbol \Phi}(\hat{{\boldsymbol \alpha}})\hat{{\mathbf B}} \|_F
\leq 3 \| {\mathbf X}^\intercal - {\boldsymbol \Phi}(\hat{{\boldsymbol \alpha}})\hat{{\mathbf B}} \|_F \; .
\end{equation}

\end{proposition}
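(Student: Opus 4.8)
The plan is to prove the two inequalities separately. The left inequality is a consequence of the triangle inequality together with the optimality of $(\breve{{\boldsymbol \alpha}},\breve{{\mathbf B}})$ for the truncated problem, while the right inequality follows from the fact that $\mathbf X_r$ is the best rank-$r$ approximation of $\mathbf X$.

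First I would handle the left inequality. I would begin with
\begin{equation}
  \| {\mathbf X}^\intercal - {\boldsymbol \Phi}(\breve{{\boldsymbol \alpha}}) \breve{{\mathbf B}} \|_F
  \leq \| {\mathbf X}^\intercal - {\mathbf X}_r^\intercal \|_F
  + \| {\mathbf X}_r^\intercal - {\boldsymbol \Phi}(\breve{{\boldsymbol \alpha}}) \breve{{\mathbf B}} \|_F \; . \nonumber
\end{equation}
Since $(\breve{{\boldsymbol \alpha}},\breve{{\mathbf B}})$ minimizes $\| {\mathbf X}_r^\intercal - {\boldsymbol \Phi}({\boldsymbol \alpha}) {\mathbf B} \|_F$ over all admissible $({\boldsymbol \alpha},{\mathbf B})$, testing this minimizer against the competitor $(\hat{{\boldsymbol \alpha}},\hat{{\mathbf B}})$ gives $\| {\mathbf X}_r^\intercal - {\boldsymbol \Phi}(\breve{{\boldsymbol \alpha}}) \breve{{\mathbf B}} \|_F \leq \| {\mathbf X}_r^\intercal - {\boldsymbol \Phi}(\hat{{\boldsymbol \alpha}}) \hat{{\mathbf B}} \|_F$. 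A second application of the triangle inequality bounds the right-hand side by $\| {\mathbf X}_r^\intercal - {\mathbf X}^\intercal \|_F + \| {\mathbf X}^\intercal - {\boldsymbol \Phi}(\hat{{\boldsymbol \alpha}}) \hat{{\mathbf B}} \|_F$, and since $\| {\mathbf X}^\intercal - {\mathbf X}_r^\intercal \|_F = \| {\mathbf X}_r^\intercal - {\mathbf X}^\intercal \|_F$, collecting the two identical truncation-error terms produces the factor of $2$ and establishes the first inequality.

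For the right inequality, it suffices to show that $\| {\mathbf X}^\intercal - {\mathbf X}_r^\intercal \|_F \leq \| {\mathbf X}^\intercal - {\boldsymbol \Phi}(\hat{{\boldsymbol \alpha}}) \hat{{\mathbf B}} \|_F$, as subtracting $\| {\mathbf X}^\intercal - {\boldsymbol \Phi}(\hat{{\boldsymbol \alpha}}) \hat{{\mathbf B}} \|_F$ and doubling then yields exactly the claimed bound. The key observation is that ${\boldsymbol \Phi}(\hat{{\boldsymbol \alpha}}) \hat{{\mathbf B}}$, being the product of a matrix with $r$ columns and a matrix with $r$ rows, has rank at most $r$. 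Because $\mathbf X_r$ is the optimal rank-$r$ approximation of $\mathbf X$ in the Frobenius norm, and this property is preserved under transposition (so that ${\mathbf X}_r^\intercal$ is the best rank-$r$ approximation of ${\mathbf X}^\intercal$), the optimality applied to the rank-$\leq r$ competitor ${\boldsymbol \Phi}(\hat{{\boldsymbol \alpha}}) \hat{{\mathbf B}}$ gives the desired inequality immediately.

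The main obstacle, modest as it is, is recognizing that ${\boldsymbol \Phi}(\hat{{\boldsymbol \alpha}}) \hat{{\mathbf B}}$ has rank at most $r$, which is precisely what allows the best-rank-$r$ optimality of $\mathbf X_r$ to be invoked; once this is noted, both inequalities reduce to elementary norm manipulations.
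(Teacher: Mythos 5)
Your proof is correct and follows essentially the same route as the paper's: triangle inequality, optimality of $(\breve{{\boldsymbol \alpha}},\breve{{\mathbf B}})$ for the truncated problem, a second triangle inequality, and then the Eckart--Young optimality of ${\mathbf X}_r$ against the rank-$\leq r$ competitor ${\boldsymbol \Phi}(\hat{{\boldsymbol \alpha}})\hat{{\mathbf B}}$ for the final bound. The only difference is presentational: you spell out the rank-$\leq r$ observation and the transposition-invariance of the Frobenius norm, which the paper's chain of inequalities leaves implicit in its last step.
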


\begin{proof}
  Using the definitions of $(\breve{{\boldsymbol \alpha}},\breve{{\mathbf B}})$
and $(\hat{{\boldsymbol \alpha}},\hat{{\mathbf B}})$, we have 

\begin{align}
  \| {\mathbf X}^\intercal - {\boldsymbol \Phi}(\breve{{\boldsymbol \alpha}}) \breve{{\mathbf B}} \|_F 
&\leq \| {\mathbf X}^\intercal - {\mathbf X}_r^\intercal \|_F + \| {\mathbf X}_r^\intercal 
- {\boldsymbol \Phi}(\breve{{\boldsymbol \alpha}}) \breve{{\mathbf B}} \|_F  \nonumber \\
&\leq \|{\mathbf X}^\intercal - {\mathbf X}_r^\intercal\|_F + \| {\mathbf X}_r^\intercal 
- {\boldsymbol \Phi}(\hat{{\boldsymbol \alpha}}) \hat{{\mathbf B}} \|_F  \nonumber \\
&\leq 2\|{\mathbf X}^\intercal - {\mathbf X}_r^\intercal\|_F + \| {\mathbf X}^\intercal 
- {\boldsymbol \Phi}(\hat{{\boldsymbol \alpha}}) \hat{{\mathbf B}} \|_F  \nonumber \\
&\leq 3 \| {\mathbf X}^\intercal 
- {\boldsymbol \Phi}(\hat{{\boldsymbol \alpha}}) \hat{{\mathbf B}} \|_F \; ,
\end{align}
as desired.
\end{proof}

\subsection{Initialization}

For good performance, the Levenberg-Marquardt algorithm,
which is at the heart of the variable projection method used to 
solve problems \cref{eq:algoprobfull} and 
\cref{eq:algoprobtrunc}, requires a good initial guess
for the parameters ${\boldsymbol \alpha}$. Let the data ${\mathbf X} = ({\mathbf z}_0,\ldots, {\mathbf z}_m)$ be as in 
the previous subsection, with ${\mathbf z}_j = {\mathbf z}(t_j) \in \C^n$. 
We will assume here that the sample times $t_j$ are in increasing order, 
$t_0 < t_1 < \cdots < t_m$. We propose using a finite
difference style approximation to obtain an initial guess.

We assume arguendo that the ${\mathbf z}_j$ are iterates of a 
finite difference scheme, with timesteps $t_j$, applied to
the ODE system

\begin{equation}
  \dot{{\mathbf z}} = {\mathbf A}{\mathbf z} \; .
\end{equation}
If the ${\mathbf z}_j$ were obtained using the trapezoidal rule, 
then we have 

\begin{equation}
  \dfrac{{\mathbf z}_j-{\mathbf z}_{j-1}}{t_j-t_{j-1}} = \dfrac12 {\mathbf A} ({\mathbf z}_j+{\mathbf z}_{j-1}) \; ,
\end{equation}
for $j = 1,\ldots, m$. Let ${\mathbf X}_1 = ({\mathbf z}_0,\ldots,{\mathbf z}_{m-1})$, 
${\mathbf X}_2 = ({\mathbf z}_1,\ldots,{\mathbf z}_m)$, and $T = \mbox{diag}(t_1-t_0,t_2-t_1,\ldots,t_m-t_{m-1})$. 
Then ${\mathbf A}$ satisfies 

\begin{equation}
  ({\mathbf X}_2-{\mathbf X}_1)T^{-1} = {\mathbf A} \dfrac{{\mathbf X}_1+{\mathbf X}_2}{2} \; .
\end{equation}

We can then use an exact DMD-like algorithm to 
approximate the eigenvalues of ${\mathbf A}$ (which are then
our initial guess for ${\boldsymbol \alpha}$). \Cref{algo:initialize}
takes as input ${\mathbf X} = ({\mathbf z}_0,\ldots,{\mathbf z}_m)$ and $t_0 < \cdots < t_m$ 
and outputs approximate eigenvalues ($\lambda_i$) of ${\mathbf A}$.

\begin{algorithm}
  \caption{Initialization routine}
  \label{algo:initialize}
\begin{enumerate}
  \item Let ${\mathbf X}_1$, ${\mathbf X}_2$, and $T$ be as above.  
Define matrices ${\mathbf Y}$ and ${\mathbf Z}$ from the data:
    \begin{equation} {\mathbf Y} = \dfrac{{\mathbf X}_1+{\mathbf X}_2}2 , 
    \qquad {\mathbf Z} = \left ( {\mathbf X}_2 - {\mathbf X}_1 \right ) T^{-1} \; . \end{equation}
  \item Take the (reduced) SVD of the matrix ${\mathbf Y}$,
    i.e. compute ${\mathbf U}$, ${\boldsymbol \Sigma}$, and ${\mathbf V}$ such that
    \begin{equation} {\mathbf Y} = {\mathbf U} {\boldsymbol \Sigma} {\mathbf V}^* \; ,\end{equation}
    where ${\mathbf U} \in \C^{n\times r}$, ${\boldsymbol \Sigma} \in \C^{r\times r}$,
    and ${\mathbf V} \in \C^{m\times r}$, with $r$ the rank of ${\mathbf Y}$.
  \item Let $\tilde{{\mathbf A}}$ be defined by 
    \begin{equation} \tilde{{\mathbf A}} = {\mathbf U}^*{\mathbf Z}{\mathbf V}{\boldsymbol \Sigma}^{-1} \; .\end{equation}
  \item Compute the eigendecomposition of $\tilde{{\mathbf A}}$,
    giving a set of $r$ vectors, ${\mathbf w}$, and eigenvalues, 
    $\lambda$, such that 
    \begin{equation} \tilde{{\mathbf A}} {\mathbf w} =
      \lambda {\mathbf w} \; . \end{equation}
  \item Return the eigenvalues. 
\end{enumerate}
\end{algorithm}

In the above discussion, we chose the trapezoidal rule
but any number of finite difference schemes are 
applicable (in particular, the Adams-Bashforth/Adams-Moulton
family of discretization schemes). We opted for the
trapezoidal rule because it treats the data symmetrically
and has favorable stability properties for 
oscillatory phenomena. It is certainly possible that 
a different choice of finite difference scheme would
be more appropriate, depending on the desired 
accuracy and stability properties for the given
application. 

We also note that if a solution of \cref{eq:optdmdprob}
is required for a certain application, then the solution
to \cref{eq:optdmdprobtrunc} can provide a good 
initial condition.

Finally, for equispaced $t_j$, the eigenvalues
returned by the exact DMD (or one of the debiased
methods of \cref{subsec:bias}) can serve
as an initial guess, after taking the logarithm and
scaling appropriately. In this sense, the optimized DMD
can be viewed as a post-processing step for the
original DMD algorithm.

\section{Examples} \label{sec:examples}

In this section, we present some numerical results 
in order to discuss the performance of the 
optimized DMD, as computed using the tools 
discussed above.
All calculations were performed in MATLAB on a 
laptop with an Intel Core i7-6600U CPU and 
16Gb of memory. The code used to generate these 
figures is available online \cite{askham2017figcode}.

For these examples, we use two notions of
system reconstruction to evaluate the
quality of the DMDs we compute. Suppose that
${\mathbf X}$ is our matrix of snapshots and ${\mathbf A}$ is the
true underlying system matrix. If we compute
$r$ DMD eigenvalues $\lambda_i$
and modes ${\boldsymbol \varphi}_i$, then an approximation
of ${\mathbf A}$ may be recovered via

\begin{equation}
  {\mathbf A} \approx ({\boldsymbol \varphi}_1 \cdots {\boldsymbol \varphi}_r ) \mbox{diag}(\lambda_1, \ldots, \lambda_r)
  ({\boldsymbol \varphi}_1 \cdots {\boldsymbol \varphi}_r )^\dagger \; .
\end{equation}
In the case that ${\mathbf A}$ is known, we can compare
this reconstruction with ${\mathbf A}$ in the Frobenius norm.

We can also consider the reconstruction of
${\mathbf X}$ given by our decomposition. As noted
in \cref{sec:reconstruction}, the quality of this
reconstruction can depend on the definition
used for determining the coefficients. In
order to put all methods on a level playing field,
we will take the reconstruction of the snapshots
to be the projection of the data onto the
time dynamics given by the computed eigenvalues.
Let ${\boldsymbol \Phi}({\boldsymbol \alpha})$ be the matrix of exponentials
as used in the definition of the optimized DMD.
Then we will define the reconstruction of the
snapshots to be $\left ({\boldsymbol \Phi}({\boldsymbol \alpha}){\boldsymbol \Phi}({\boldsymbol \alpha})^\dagger {\mathbf X}^\intercal \right )^\intercal$.
A measure of the reconstruction quality is then given
by the relative Frobenius norm of the residual, i.e.

\begin{equation} \label{eq:optrecerr}
  \dfrac{ \| {\mathbf X}^\intercal - {\boldsymbol \Phi}({\boldsymbol \alpha}){\boldsymbol \Phi}({\boldsymbol \alpha})^\dagger {\mathbf X}^\intercal \|_F}
  {\| {\mathbf X}^\intercal \|_F} \;.
\end{equation}

\subsection{Synthetic data}

First, we revisit some of the synthetic
data examples of \cite{dawson2016} in order 
to discuss the effect of noise on the optimized DMD.

\subsubsection{Example 1: measurement noise, periodic system}

Let ${\mathbf z}(t)$ be the solution of a two dimensional
linear system with the following dynamics

\begin{equation} \label{eq:example1}
  \ddot{{\mathbf z}} = \begin{pmatrix} 1 & -2 \\ 1 & -1 
  \end{pmatrix} {\mathbf z} \; .
\end{equation}
We use the initial condition ${\mathbf z}(0) = (1,0.1)^\intercal$
and take snapshots ${\mathbf z}_j = {\mathbf z}(j \Delta t) + \sigma {\mathbf g}$ with
$\Delta t = 0.1$, $\sigma$ a prescribed noise level, and
${\mathbf g}$ a vector whose entries are drawn from a standard
normal distribution.
The continuous time eigenvalues of this system
are $\pm i$ (this is how the optimized DMD computes
eigenvalues) and the discrete time eigenvalues are
$\exp(\pm \Delta t i)$. These dynamics should display
neither growth nor decay, but in the presence of noise,
the exact DMD eigenvalues have a negative
real part because of inherent bias in the definition
\cite{dawson2016}.

We consider the effect of both the size of the noise,
$\sigma$, and the number of snapshots, $m$, on the
quality of the modes and eigenvalues obtained
from various methods. We set the noise level to the
values $\sigma^2 = 10^{-1}, 10^{-3}, \ldots, 10^{-9}$
and run tests with $m = 2^6, 2^7, \ldots, 2^{13}$
snapshots. 
For each noise level and number of snapshots,
we compute the eigenvalues and modes of this system
using the exact DMD, fbDMD, tlsDMD, and optimized
DMD over 1000 trials (different draws of the vector
${\mathbf g}$). 

\begin{figure}[h!]

\centering

\includegraphics[width=.8\textwidth]{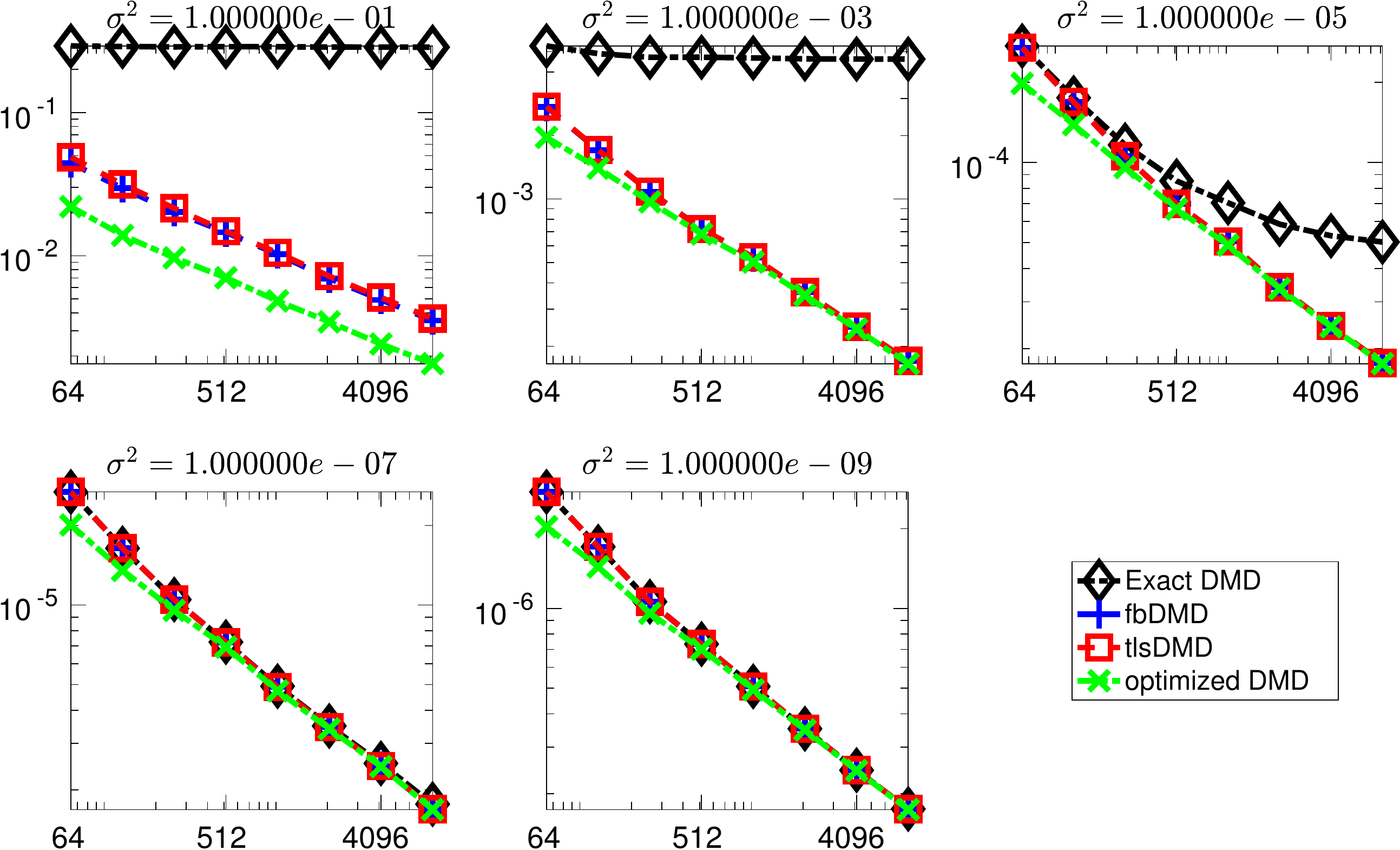}

\caption{Example 1. This figure shows the mean Frobenius norm error 
(averaged over 1000 runs) in the reconstructed
system matrix ${\mathbf A}$ as a function of the number of snapshots
$m$ for various noise levels $\sigma^2$.}

\label{fig:senper_reca}

\end{figure}

In \cref{fig:senper_reca}, we plot the mean Frobenius
norm error in the reconstructed system matrix (averaged
over the trials) as a function of the
number of snapshots for various noise levels.
We see that, as in \cite{dawson2016}, the error in the
exact DMD eventually levels off at the higher noise
levels because of the bias in its eigenvalues. The
other methods perform well, with the error decaying
as the number of snapshots increases. The optimized DMD
is shown to have an advantage over the fbDMD and tlsDMD
in this measure
primarily at the highest noise levels and with the
fewest snapshots. 

\begin{figure}[h!]

\centering

\includegraphics[width=.8\textwidth]{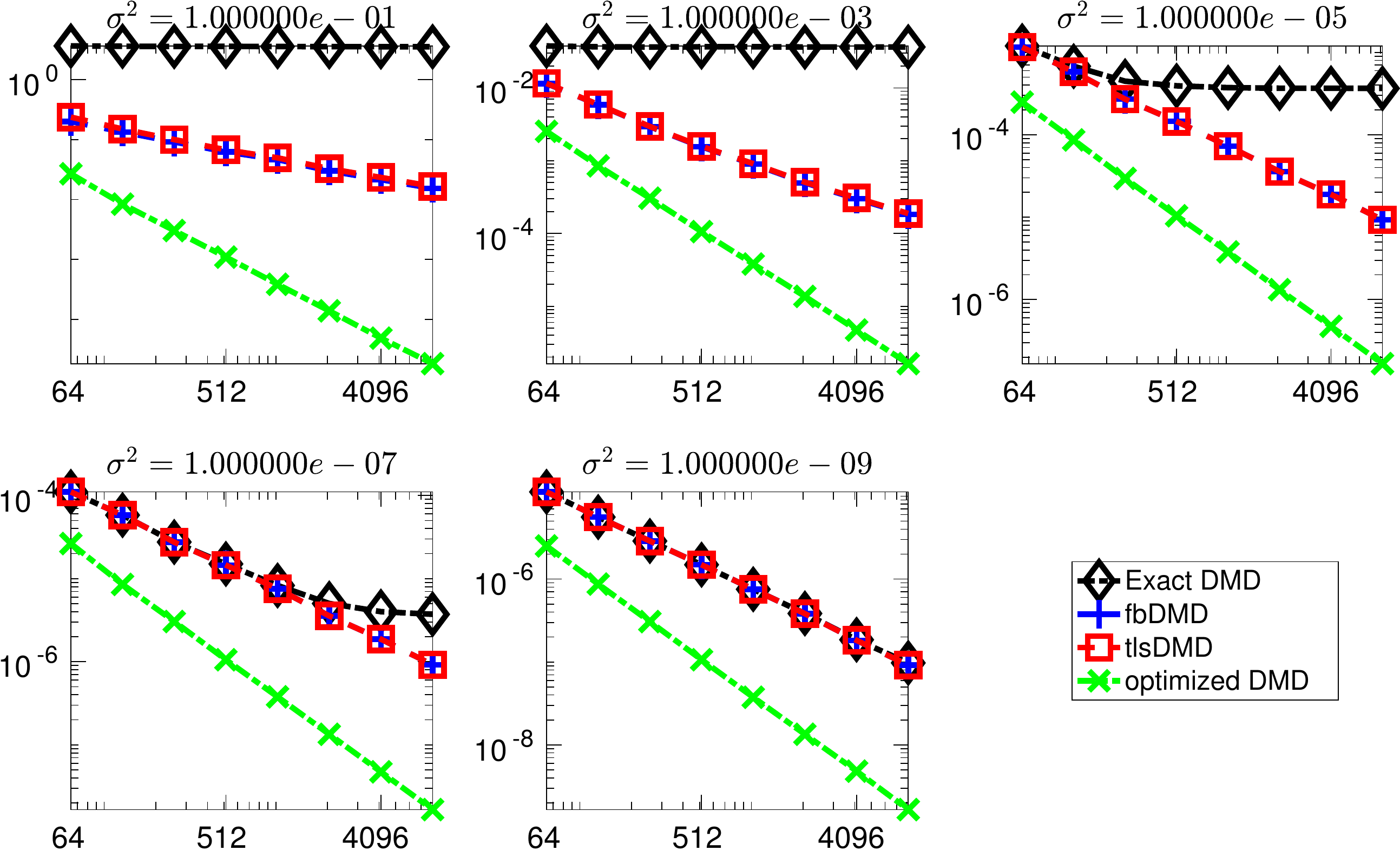}

\caption{Example 1. This figure shows the mean $l^2$ error (averaged
over 1000 runs) in the recovered eigenvalues of the
system matrix ${\mathbf A}$ as a function of the number of snapshots
$m$ for various noise levels $\sigma^2$.}

\label{fig:senper_eig}

\end{figure}

\Cref{fig:senper_eig} contains plots of the $l^2$
norm error in the computed eigenvalues (averaged
over the trials) as a function of the
number of snapshots for various noise levels.
Again, the error in the
exact DMD eventually levels off at the lower noise
levels because of the bias in its eigenvalues. The
other methods perform well, with the error decaying
as the number of snapshots increases. However,
in this measure, the advantage of the optimized DMD
is more pronounced. The error in the eigenvalues
for the optimized DMD is lower than for the fbDMD
and tlsDMD across all noise levels and is observed to
decrease faster as the number of snapshots is
increased. 

\begin{figure}[h!]

\centering

\includegraphics[width=.8\textwidth]{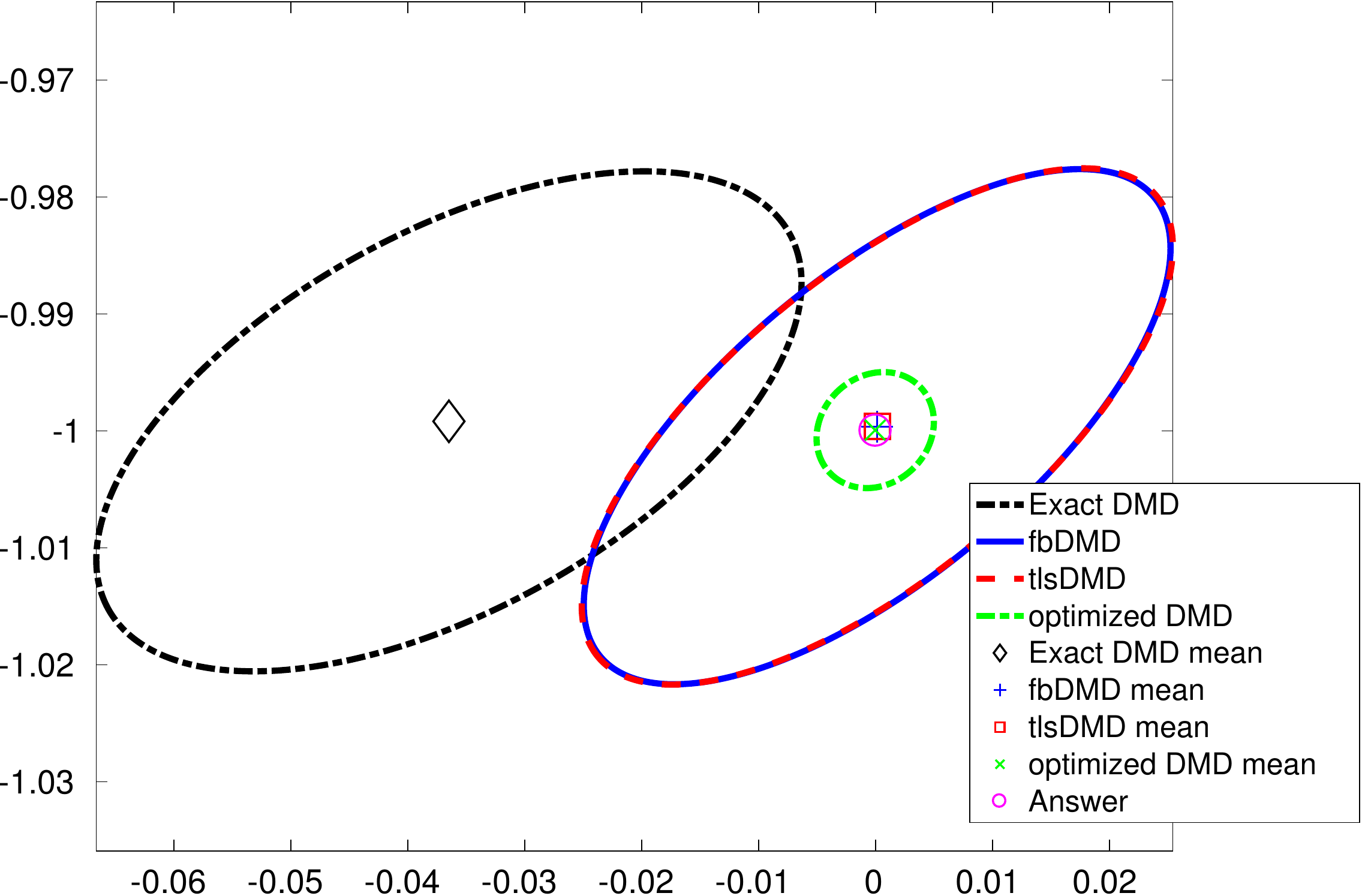}

\caption{Example 1. This figure shows 95 percent confidence ellipses
for the eigenvalue $i$ (based on 1000 runs) for the second-highest
noise level $\sigma^2 = 10^{-3}$ and fewest snapshots
$m = 64$.}

\label{fig:senper_eig_conf}

\end{figure}

We plot 95 percent confidence ellipses (in the complex plane)
for the eigenvalue $i$
for the second-highest noise level and fewest number of snapshots
in \cref{fig:senper_eig_conf}. The bias in the exact DMD is
evident, as the center of the ellipse is seen to be shifted into
the left half-plane. The fbDMD and tlsDMD are seen to correct
for this bias, but the spread of the optimized DMD eigenvalues
is notably smaller.

\begin{figure}[h!]

\centering

\includegraphics[width=.8\textwidth]{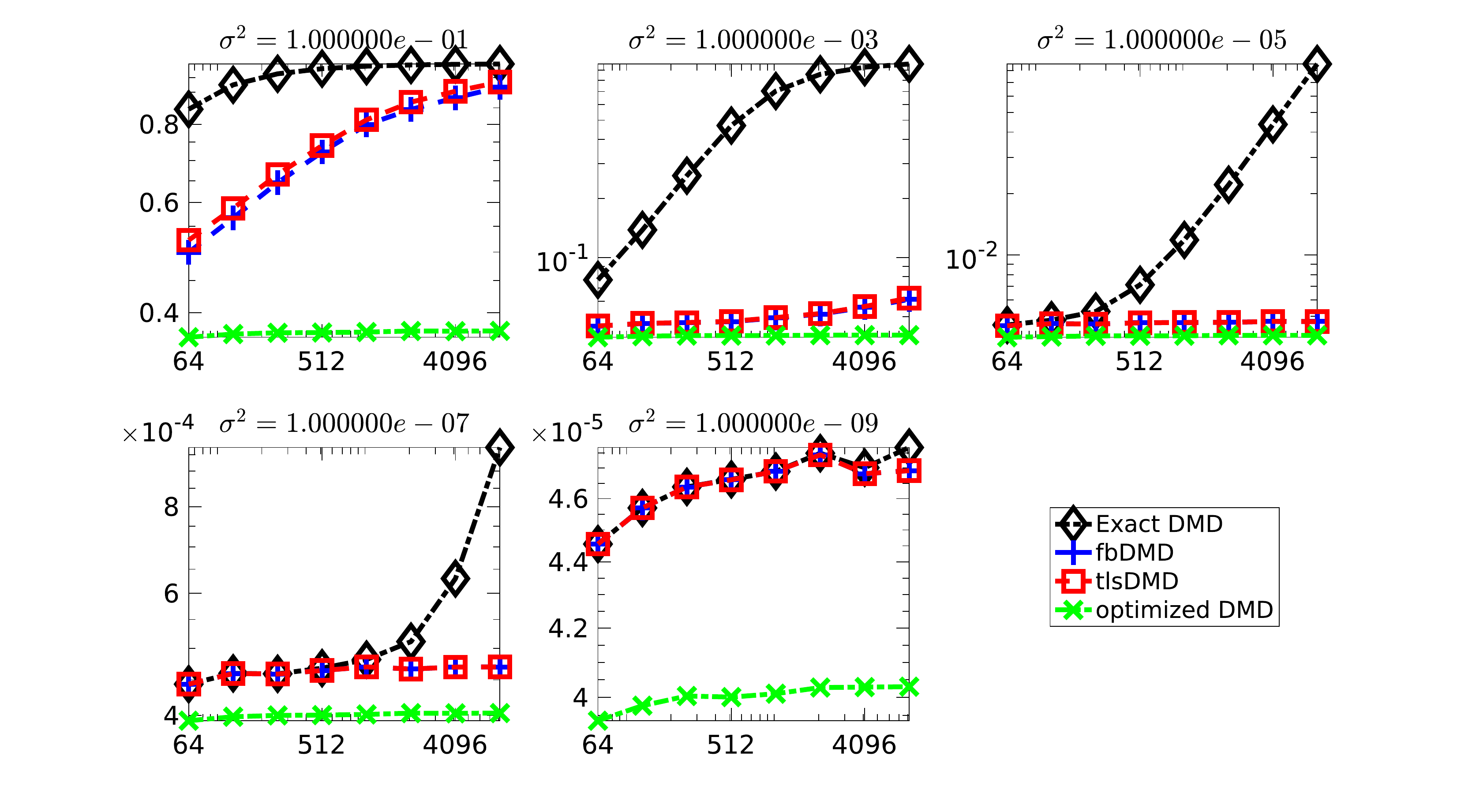}

\caption{Example 1. This figure shows the mean Frobenius norm error 
(averaged over 1000 runs) of the reconstructed snapshots
as a function of the number of snapshots $m$ for various 
noise levels $\sigma^2$.}

\label{fig:senper_rec}

\end{figure}

In \cref{fig:senper_rec}, we plot the mean error in the optimal
reconstruction of the snapshots using the computed eigenvalues,
see \cref{eq:optrecerr} for the definition of this error.
The optimized DMD demonstrates an advantage
across noise levels and number of snapshots used, though
it is more pronounced at higher noise levels. Further,
the error in the optimized DMD is seen to be relatively
flat as the number of snapshots increases, which is to
be expected. The reconstruction error increases for the
other methods as the number of snapshots increases, particularly
at the higher noise levels.

\subsubsection{Example 2: measurement noise, hidden dynamics}

In the case that a signal contains some rapidly decaying
components it can be more difficult to identify  the
dynamics, particularly in the presence of sensor noise
\cite{dawson2016}. We consider a signal composed of two
sinusoidal signals which are translating, with one growing
and one decaying, i.e.

\begin{equation}
  z(x,t) = \sin( k_1x - \omega_1 t) e^{\gamma_1 t} +
  \sin( k_2x - \omega_2 t) e^{\gamma_2 t} \; ,
\end{equation}
where $k_1 = 1$, $\omega_1 = 1$, $\gamma_1 = 1$, $k_2 = 0.4$,
$\omega_2 = 3.7$, and $\gamma_2 = -0.2$ (these are the
settings used in \cite{dawson2016}). This signal
has four continuous time eigenvalues given
by $\gamma_1 \pm i \omega_1$ and $\gamma_2 \pm i \omega_2$.
We set the domain of $x$ to be $[0,15]$ and use
300 equispaced points to discretize. For the
time domain, we set $\Delta t = 2\pi/(2^9-1)$
so that the largest number of snapshots we use,
$m = 2^9$, covers $[0,2\pi]$.

As before, we consider the effect of both the size
of the noise, $\sigma$, and the number of snapshots,
$m$, on the
quality of the modes and eigenvalues obtained
from various methods. We set the noise level to the
values $\sigma^2 = 2^{-2}, 2^{-4}, \ldots, 2^{-10}$
and run tests with $m = 2^7, 2^8, 2^{9}$
snapshots (the range of the number of snapshots
is more limited for this problem by the exponential
growth in the signal).
For each noise level and number of snapshots,
we compute the eigenvalues and modes of this system
using the exact DMD, fbDMD, tlsDMD, and optimized
DMD over 1000 trials (different draws of the vector
${\mathbf g}$). We compute the DMD for each of these methods
with the data
projected on the first 4 POD modes (the first 4
left singular vectors of the data matrix). For the
optimized DMD, this means we are using the
approximate algorithm, \cref{algo:approxoptdmd}.

\begin{figure}[h!]

\centering

\includegraphics[width=.8\textwidth]{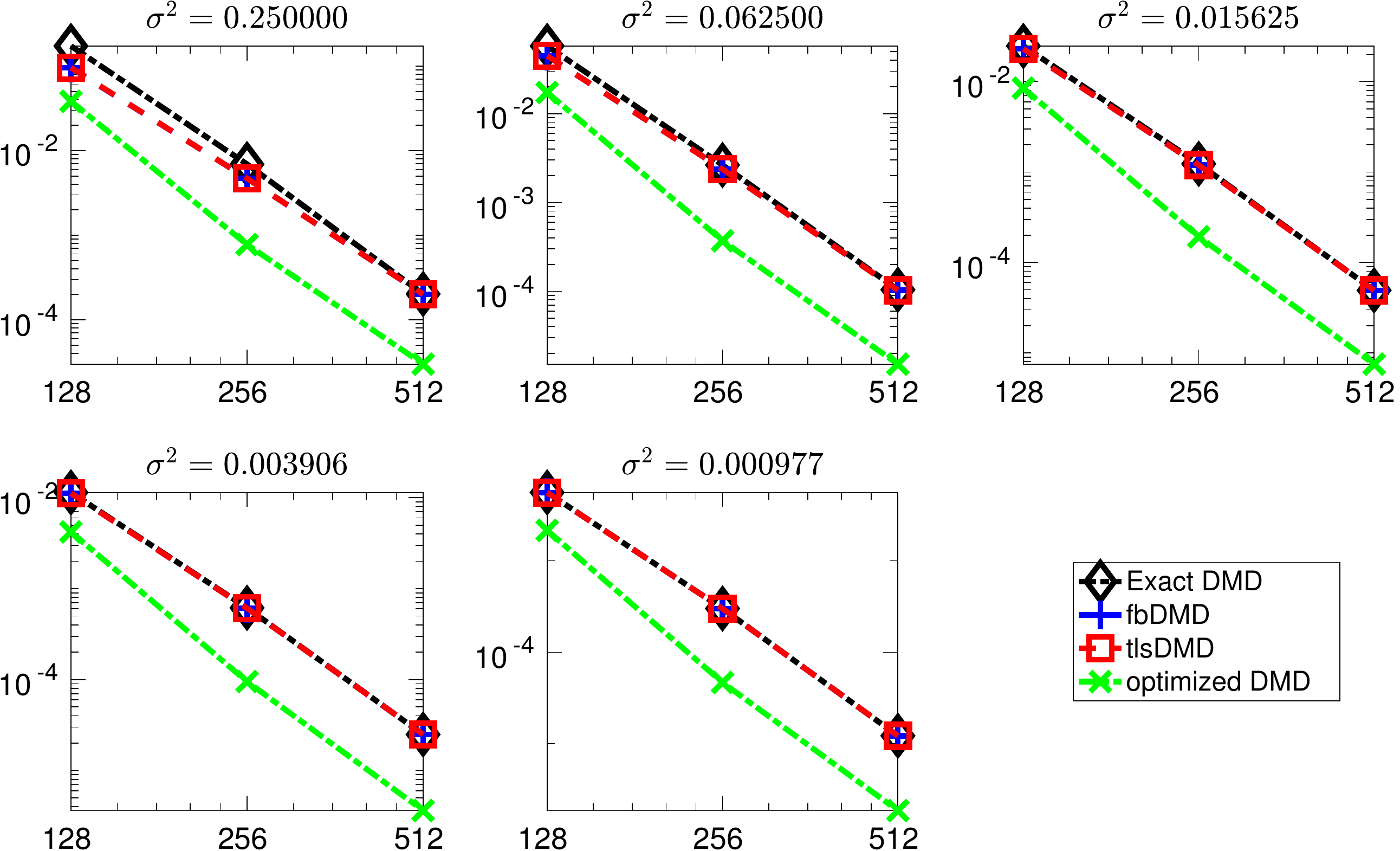}

\caption{Example 2. This figure shows the mean $l^2$ error (averaged
over 1000 runs) in the recovered dominant (growing) eigenvalues 
of the system as a function of the number of snapshots
$m$ for various noise levels $\sigma^2$.}

\label{fig:senhid_dom_eig}

\end{figure}

\begin{figure}[h!]

\centering

\includegraphics[width=.8\textwidth]{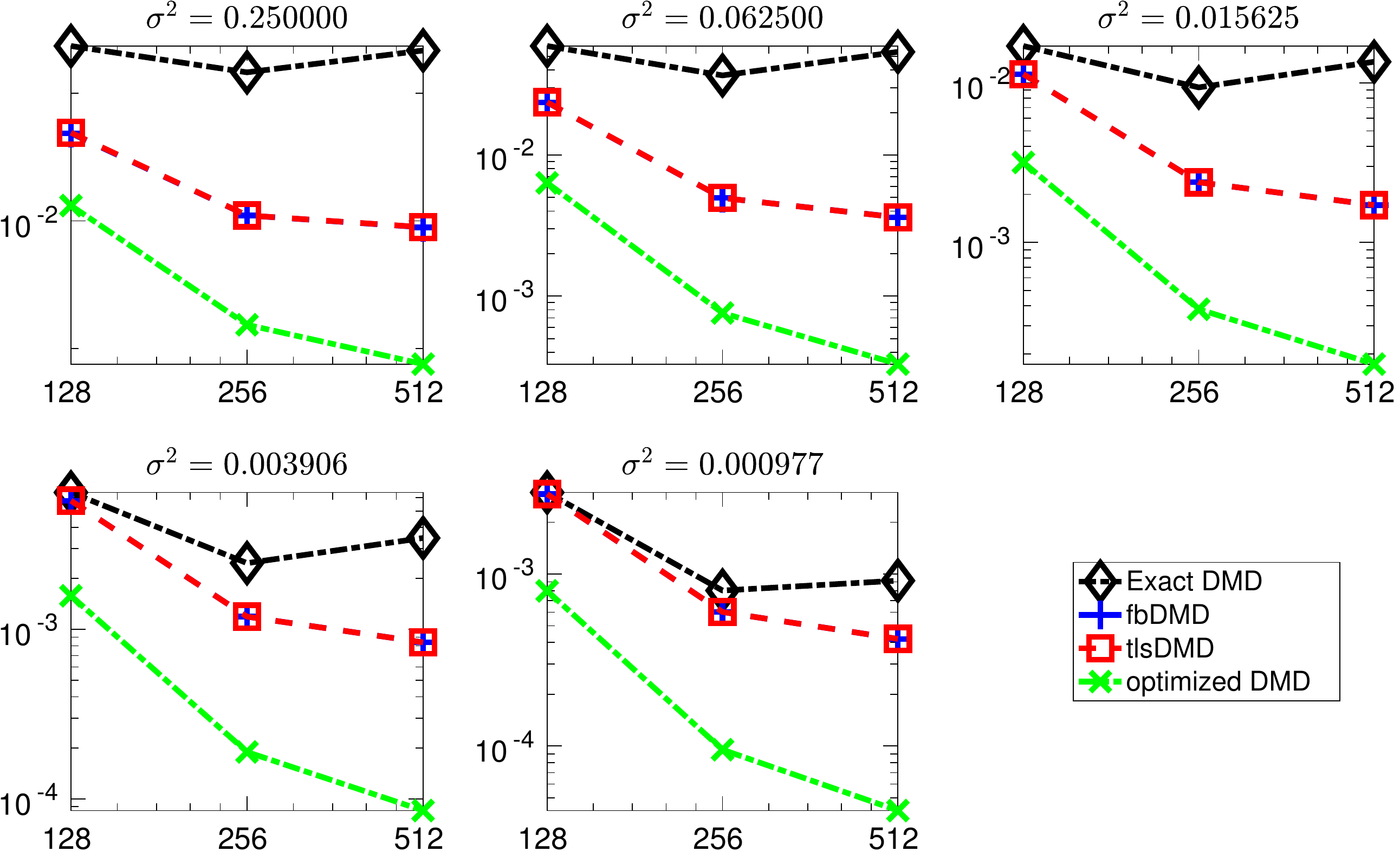}

\caption{Example 2. This figure shows the mean $l^2$ error (averaged
over 1000 runs) in the recovered hidden (shrinking) eigenvalues 
of the system as a function of the number of snapshots
$m$ for various noise levels $\sigma^2$.}

\label{fig:senhid_hid_eig}

\end{figure}

In \cref{fig:senhid_dom_eig,fig:senhid_hid_eig}, we plot
the mean $l^2$ error (averaged over the trials) in the
recovered dominant eigenvalues ($1\pm i$) and the
recovered hidden eigenvalues ($-0.2 \pm 3.7 i$),
respectively. For the dominant eigenvalues, the exact
DMD, fbDMD, and tlsDMD perform similarly and the optimized
DMD has lower error, up to an order of magnitude
more accurate for some settings.
For the hidden eigenvalues, the bias in the exact DMD
is evident and it performs notably worse. It seems
that the same phenomenon occurs in which the bias
of the exact DMD causes these curves to level off
prematurely. 
Of course, the signal for the hidden eigenvalues will
eventually decay so much that there is no advantage
in adding more snapshots. This is evident in these
plots as the curves flatten out for all methods. 
Again, the optimized DMD has an advantage across
noise levels and number of snapshots. Importantly,
it appears that for some noise levels, the other
methods won't perform as well as the optimized DMD,
even if you provide them with more snapshots.

\begin{figure}[h!]

\centering

\includegraphics[width=.8\textwidth]{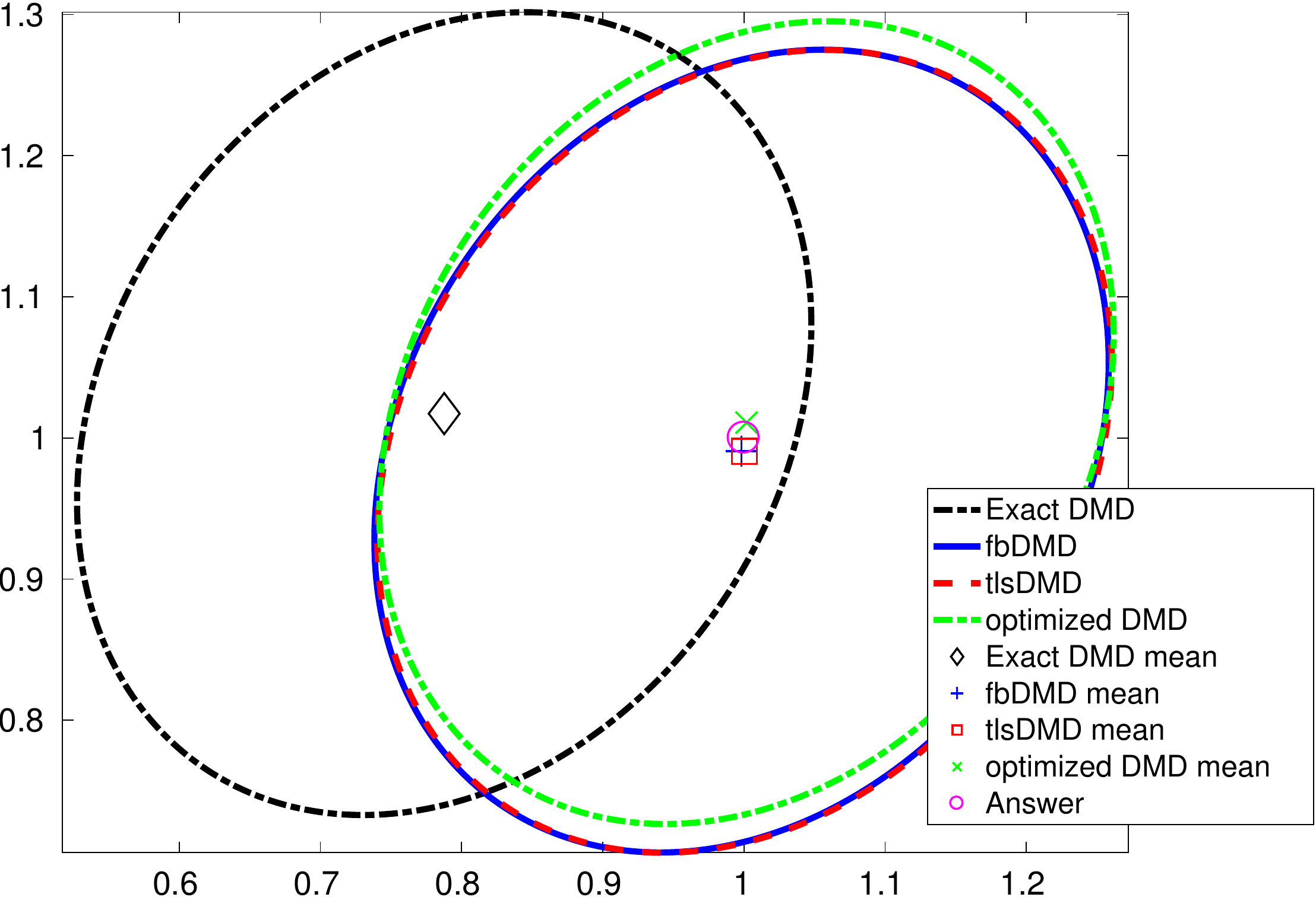}

\caption{Example 2. This figure shows 95 percent confidence ellipses
for one of the dominant eigenvalues (based on 1000 runs) 
for the highest noise level $\sigma^2 = 2^{-2}$ 
and fewest snapshots $m = 128$.}

\label{fig:senhid_dom_conf}

\end{figure}

\begin{figure}[h!]

\centering

\includegraphics[width=.8\textwidth]{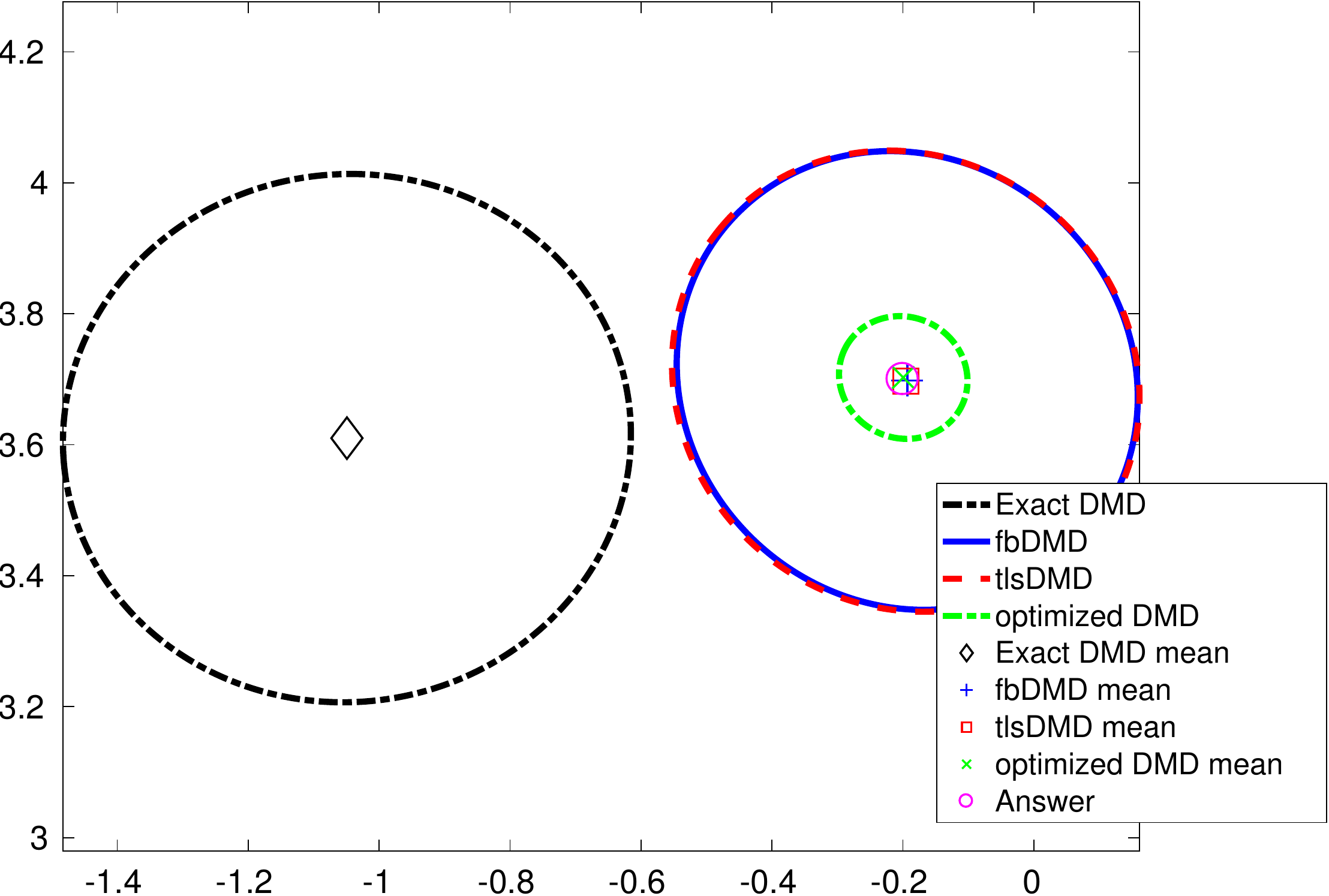}

\caption{Example 2. This figure shows 95 percent confidence ellipses
for one of the hidden eigenvalues (based on 1000 runs) 
for the highest noise level $\sigma^2 = 2^{-2}$ 
and fewest snapshots $m = 128$.}

\label{fig:senhid_hid_conf}

\end{figure}

We plot 95 percent confidence ellipses (in the complex plane)
for the eigenvalues $1+i$ and $-0.2+3.7i$
in \cref{fig:senhid_dom_conf,fig:senhid_hid_conf}, respectively,
for the highest noise level and fewest number of snapshots.
The bias in the exact DMD is
evident, as the center of the ellipse is seen to be shifted
to the left for both eigenvalues. For the dominant eigenvalue
($1+i$),
we see that the spread of the eigenvalues for the fbDMD,
tlsDMD, and optimized DMD are similar and that all three
of these methods correct for the bias. For the hidden
eigenvalue ($-0.2+3.7i$), the fbDMD, tlsDMD, and optimized
DMD all correct for the bias but the spread of the
optimized DMD is notably smaller.

\begin{figure}[h!]

\centering

\includegraphics[width=.8\textwidth]{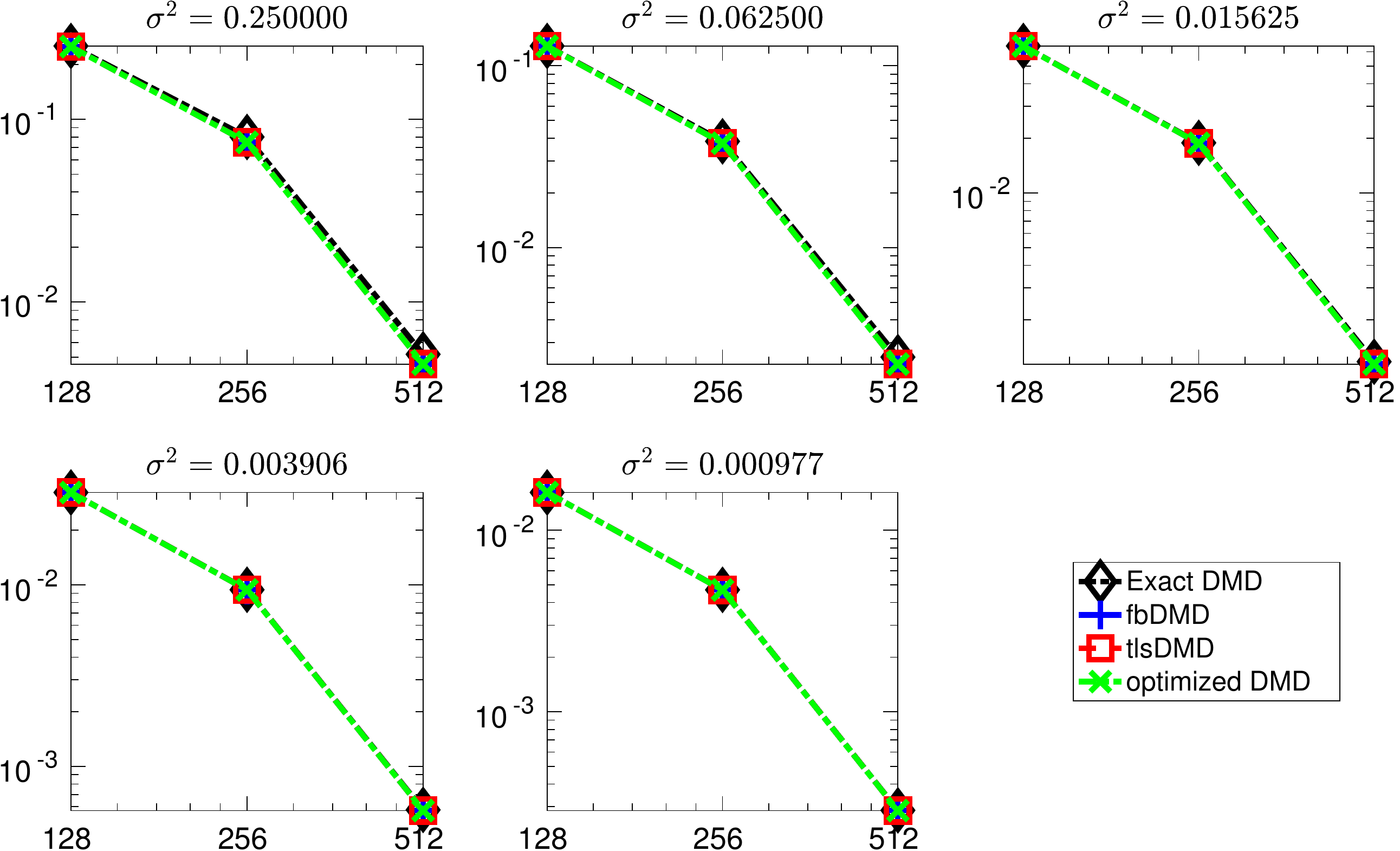}

\caption{Example 2. This figure shows the mean Frobenius norm error 
(averaged over 1000 runs) of the reconstructed snapshots
as a function of the number of snapshots $m$ for various 
noise levels $\sigma^2$.}

\label{fig:senhid_rec}

\end{figure}

In \cref{fig:senhid_rec}, we plot the mean error in the optimal
reconstruction of the snapshots using the computed eigenvalues,
see \cref{eq:optrecerr} for the definition of this error. In
contrast with the periodic example, the error curves roughly
coincide for all methods and the error decreases as the number
of snapshots increases. This is likely a result of the fact
that the growing modes dominate more and more as the
system advances in time (when new snapshots are added, they
come from later in the time series). It is interesting that
the reconstruction error is only marginally better for the
optimized DMD --- this error is what the optimized DMD tries to
minimize --- but the recovered eigenvalues are significantly
better.

\begin{figure}[h!]

\centering

\includegraphics[width=.8\textwidth]{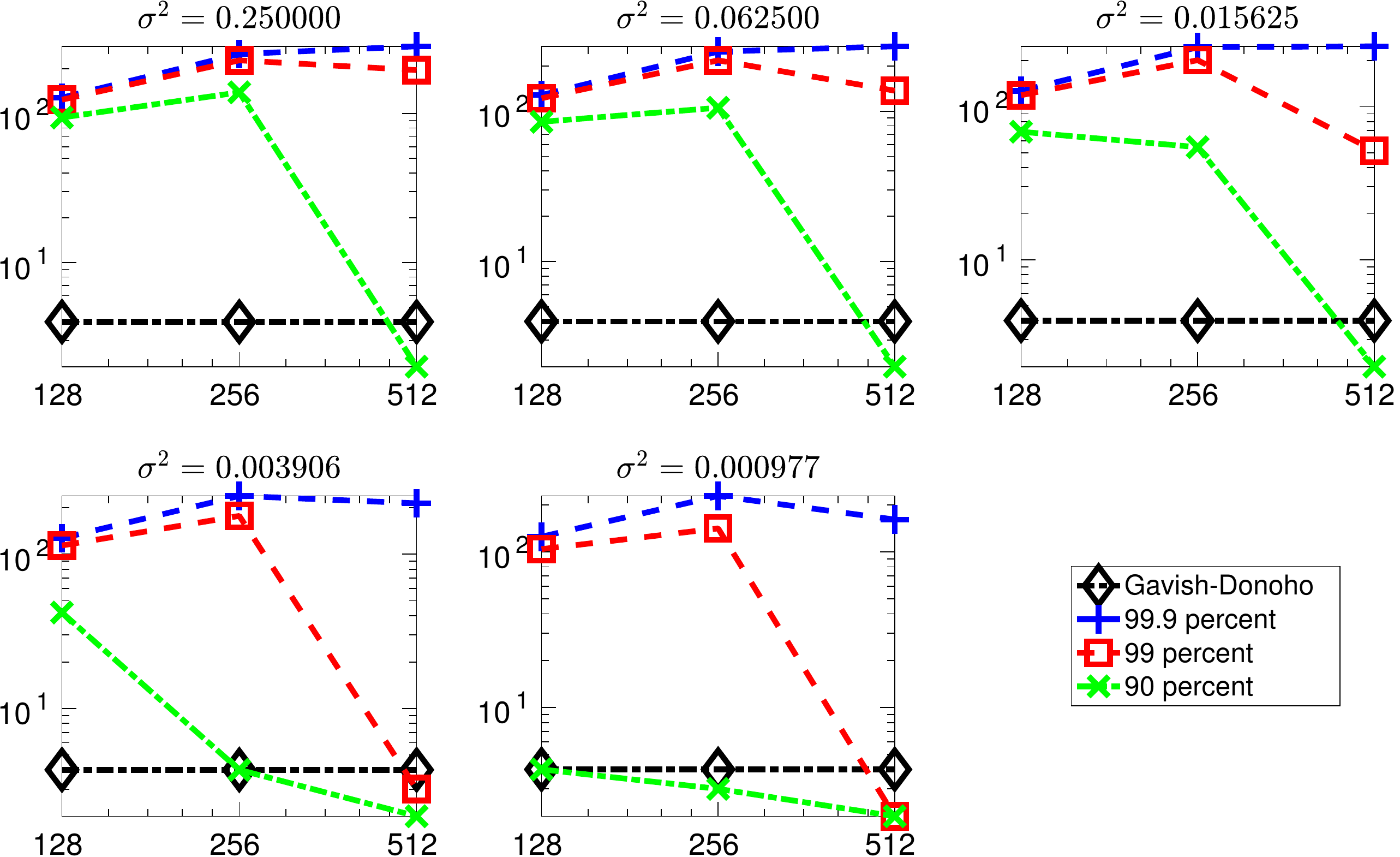}

\caption{Example 2. This figure shows the mean estimated
  rank of the data (averaged over 1000 runs) using
  the Gavish-Donoho, 99.9 percent, 99 percent, and 90
  percent hard-thresholds
  as a function of the number of snapshots $m$ for various 
  noise levels $\sigma^2$.}

\label{fig:senhid_rank}

\end{figure}

Because the data in this example is in a high dimensional
space relative to the rank of the dynamics, we must use
some sort of truncation when computing the DMD (using
any of the methods). For the comparisons above, we used
the a priori knowledge we have of the system
to always truncate at rank 4. When this a priori
information is unavailable, it is sometimes necessary
to determine an appropriate truncation from the data.
In \cref{fig:senhid_rank}, we compare the hard-threshold
(the number of singular values to keep) obtained from
the Gavish-Donoho formula \cite{gavish2014} with the
hard-threshold obtained by
keeping 99.9, 99, and 90 percent of the energy in the
singular values. The Gavish-Donoho formula always
produced 4 in our experiments, while the cut-offs based
on keeping a certain percentage of the energy produced
wildly different results depending on noise level and
number of snapshots. The type of error in this
example exactly satisfies the assumptions used to
obtain the Gavish-Donoho formula; nonetheless, the
performance of the formula is impressive in comparison
with these other a posteriori methods.

\begin{figure}[h!]

\centering

\includegraphics[width=.8\textwidth]{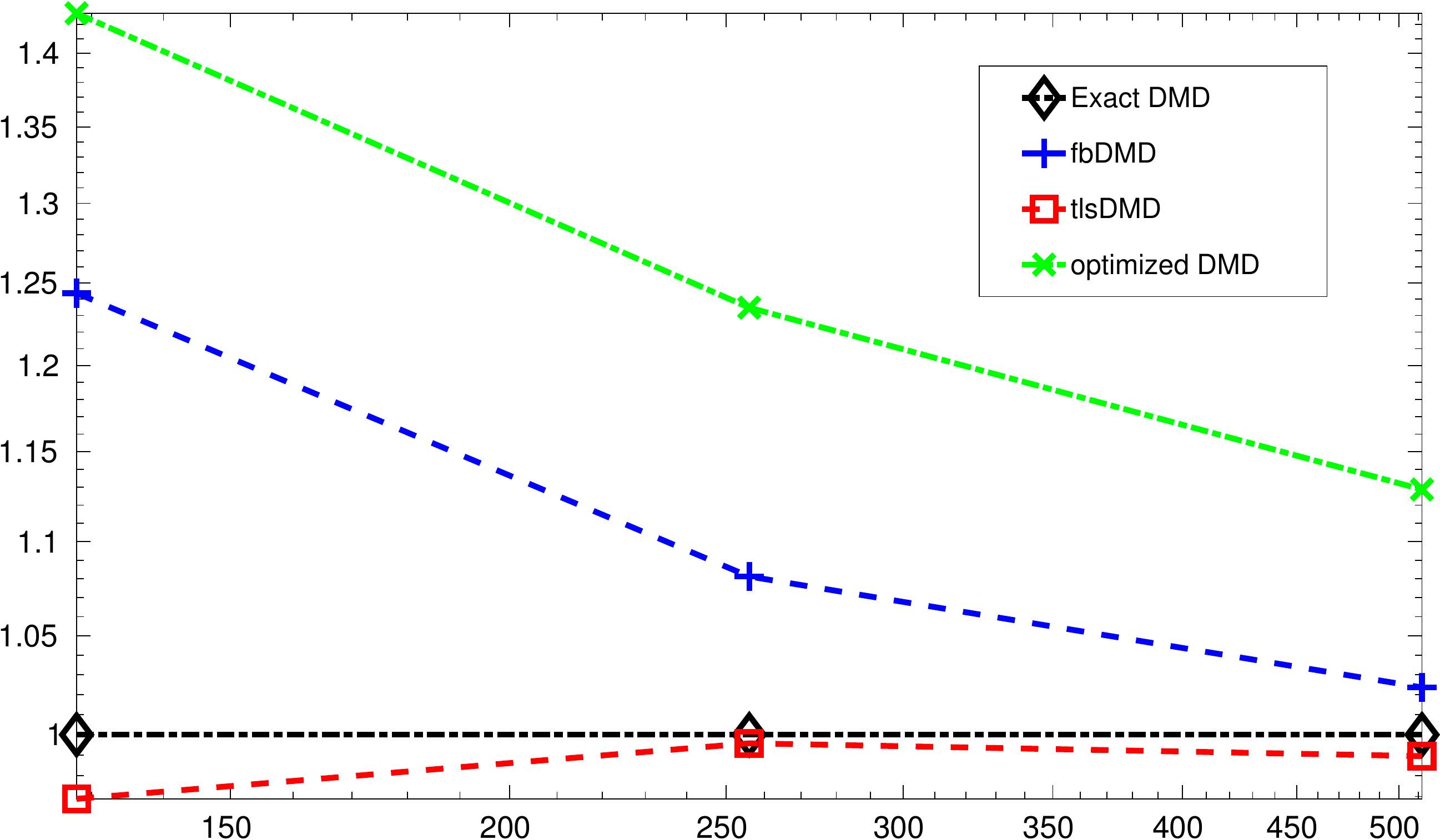}

\caption{Example 2. This figure shows the mean run-time
  of the methods (averaged over 1000 runs) relative to
  the mean run-time for the exact DMD 
  as a function of the number of snapshots $m$.}

\label{fig:senhid_times}

\end{figure}

In \cref{fig:senhid_times}, we plot the mean
run-time of each method as you increase the number
of snapshots, with the values normalized by the
mean run-time of the exact DMD. We see that the
optimized DMD indeed requires more computation than
the other methods but that the increase is modest.
For this example, the run-time is dominated by the
SVD used for the truncation in each method.
These values should be taken with a grain of salt,
as they depend significantly on the quality of
the implementation (and, for the optimized DMD, on
the parameters sent to the optimization routine). 
Note that our implementation
of the fbDMD checks every possible square root for
the optimal answer, which is costly for larger
systems.

\subsubsection{Example 3: uncertain sample times, periodic system}

For this example, we revisit the system of
Example 1, \cref{eq:example1}, but introduce
a different type of sampling error: uncertain
sample times. 
Let ${\mathbf z}(t)$ be the solution of \cref{eq:example1}
with the initial condition ${\mathbf z}(0) = (1,0.1)^\intercal$.
Let the snapshots be given by ${\mathbf z}_j = {\mathbf z}( (j+\sigma g_j) \Delta t)$
with $\Delta t = 0.1$, $\sigma$ a prescribed noise level, and
${\mathbf g}$ a vector whose entries are drawn from a standard
normal distribution. 
Again, the continuous time eigenvalues of this system
are $\pm i$ (this is how the optimized DMD computes
eigenvalues) and the discrete time eigenvalues are
$\exp(\pm \Delta t i)$. Intuitively, the methods should
behave like they did for the sensor noise
example (consider the Taylor series of
${\mathbf z}( (j+\sigma g_j) \Delta t)$ about ${\mathbf z}(j\Delta t)$) but
there is a different structure to the noise here.

We consider the effect of both the size of the noise,
$\sigma$, and the number of snapshots, $m$, on the
quality of the modes and eigenvalues obtained
from various methods. We set the noise level to the
values $\sigma^2 = 2^{-2}, 2^{-4}, \ldots, 2^{-10}$
and run tests with $m = 2^6, 2^7, \ldots, 2^{13}$
snapshots. 
For each noise level and number of snapshots,
we compute the eigenvalues and modes of this system
using the exact DMD, fbDMD, tlsDMD, and optimized
DMD over 1000 trials (different draws of the vector
${\mathbf g}$). 

\begin{figure}[h!]

\centering

\includegraphics[width=.8\textwidth]{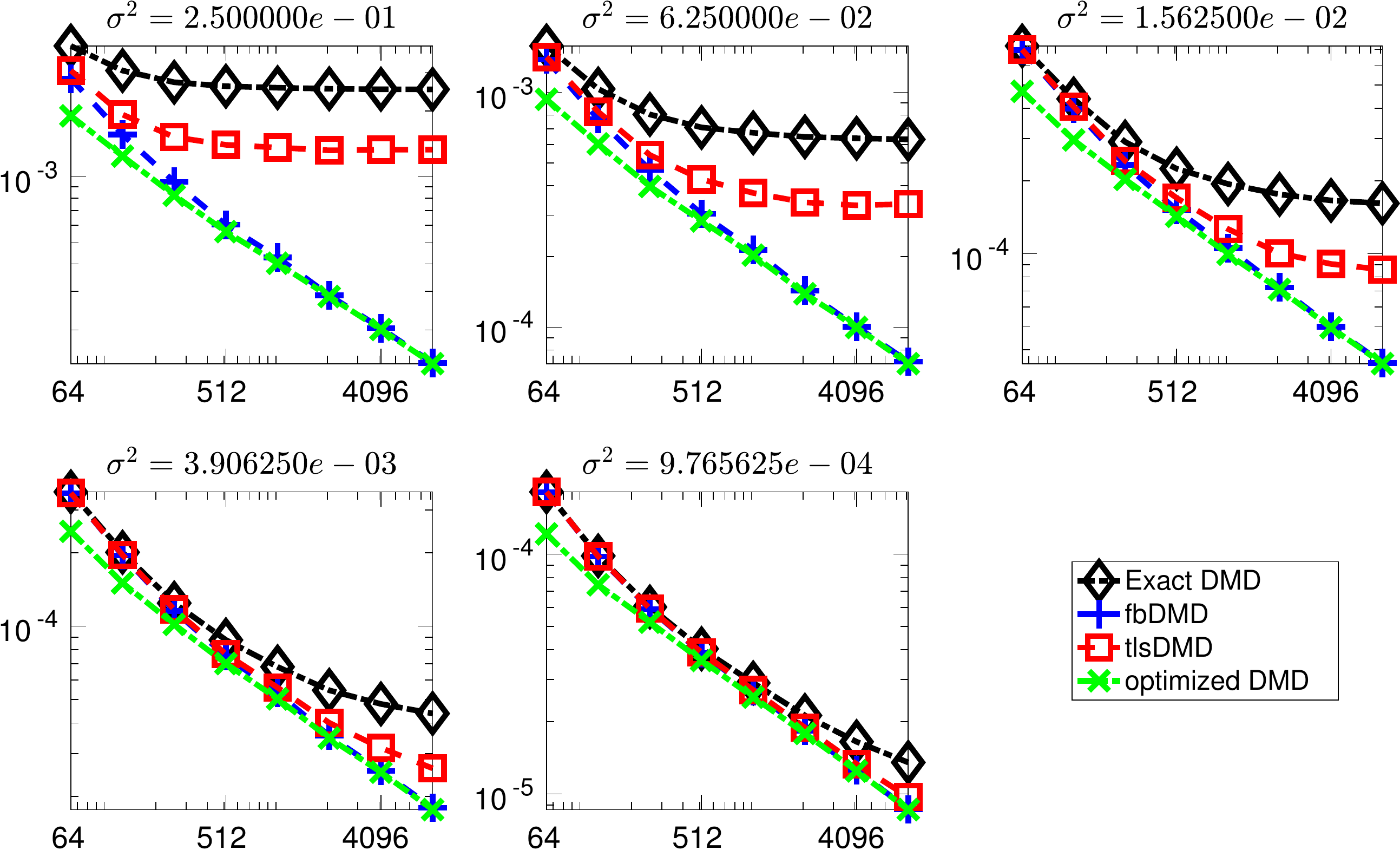}

\caption{Example 3. This figure shows the mean Frobenius norm error 
(averaged over 1000 runs) in the reconstructed
system matrix ${\mathbf A}$ as a function of the number of snapshots
$m$ for various noise levels $\sigma^2$.}

\label{fig:samper_reca}

\end{figure}

In \cref{fig:samper_reca}, we plot the mean Frobenius
norm error in the reconstructed system matrix (averaged
over the trials) as a function of the
number of snapshots for various noise levels.
We see that, as in Example 1, the error in the
exact DMD eventually levels off at the higher noise
levels because of the bias in its eigenvalues. Surprisingly,
this occurs for the tlsDMD as well, but at a lower
error. The fbDMD and optimized DMD perform well, with
the error decaying
as the number of snapshots increases. The optimized
DMD shows slight improvement over the fbDMD 
at the highest noise levels and with the
fewest snapshots. 

\begin{figure}[h!]

\centering

\includegraphics[width=.8\textwidth]{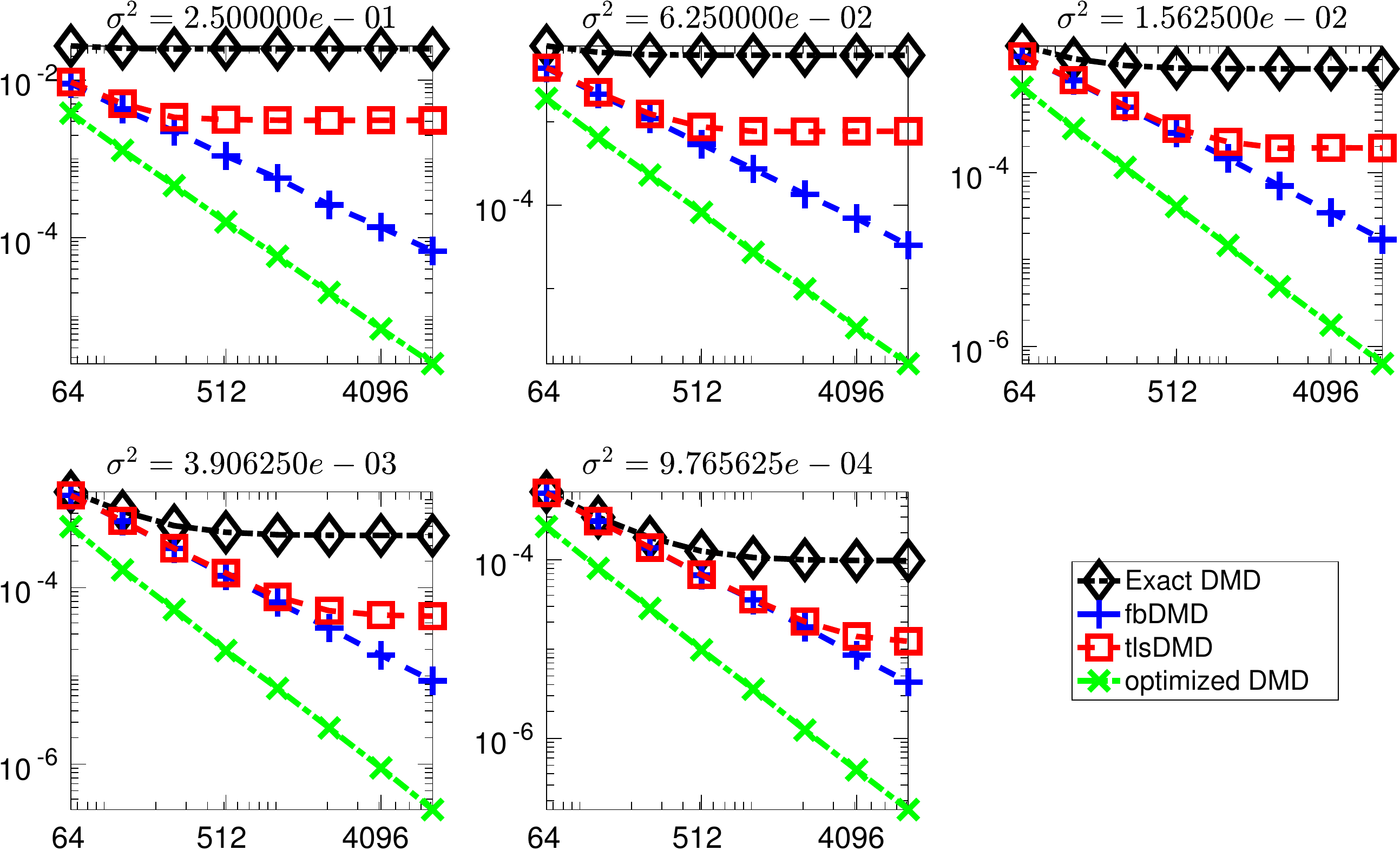}

\caption{Example 3. This figure shows the mean $l^2$ error (averaged
over 1000 runs) in the recovered eigenvalues of the
system matrix ${\mathbf A}$ as a function of the number of snapshots
$m$ for various noise levels $\sigma^2$.}

\label{fig:samper_eig}

\end{figure}

\Cref{fig:samper_eig} contains plots of the $l^2$
norm error in the computed eigenvalues (averaged
over the trials) as a function of the
number of snapshots for various noise levels.
Again, the error in the
exact DMD eventually levels off at the lower noise
levels because of the bias in its eigenvalues. We see
similar behavior for the tlsDMD. The
fbDMD and optimized DMD perform well, with the error decaying
as the number of snapshots increases. However,
in this measure, the advantage of the optimized DMD
is more pronounced. The error in the eigenvalues
for the optimized DMD is lower than for the fbDMD
and tlsDMD across all noise levels and is observed to
decrease faster as the number of snapshots is
increased.

\begin{figure}[h!]

\centering

\includegraphics[width=.8\textwidth]{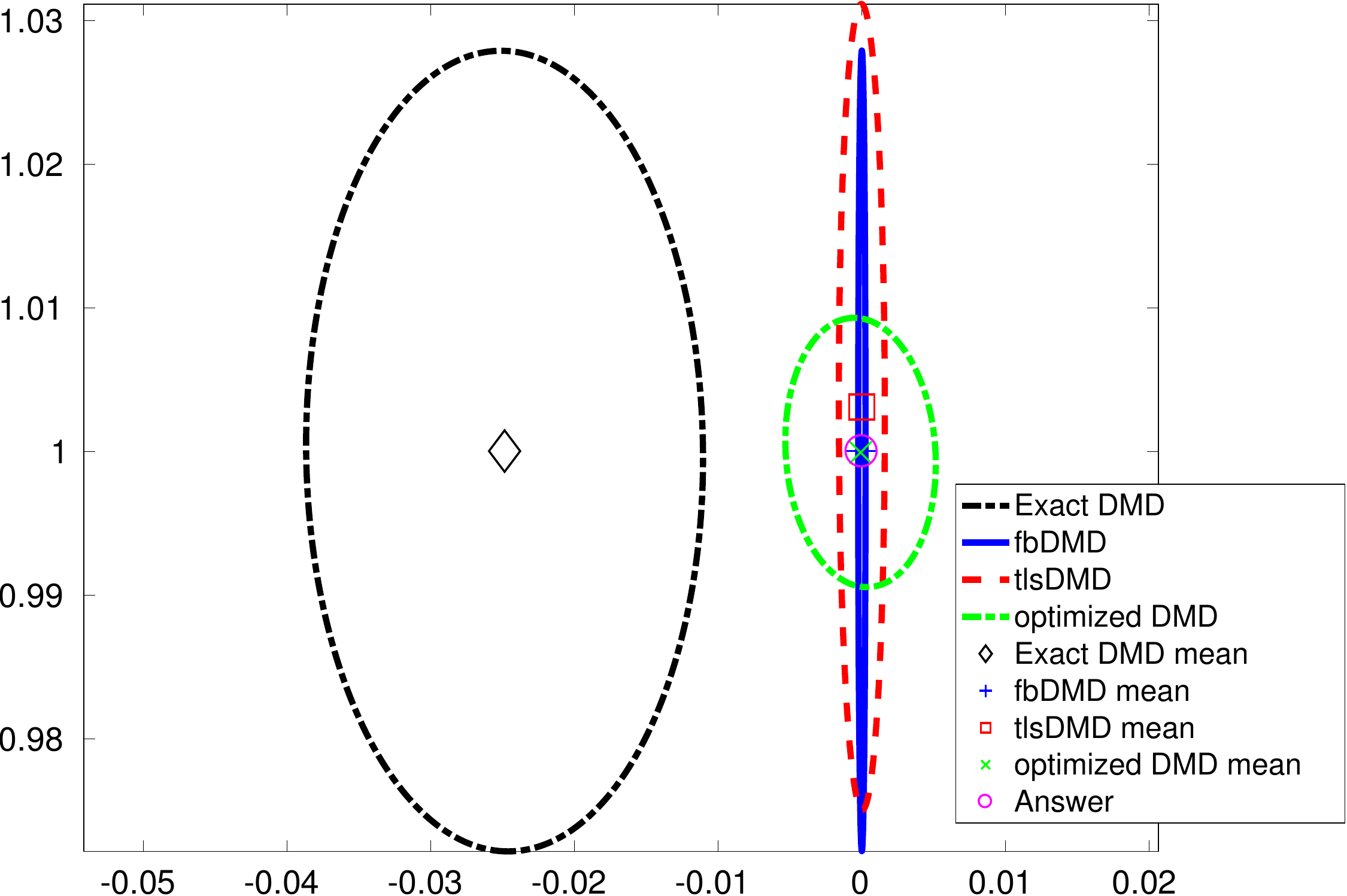}

\caption{Example 3. This figure shows 95 percent confidence ellipses
for the eigenvalue $i$ (based on 1000 runs) for the highest
noise level $\sigma^2 = 2^{-2}$ and fewest snapshots
$m = 64$.}

\label{fig:samper_eig_conf}

\end{figure}

We plot 95 percent confidence ellipses (in the complex plane)
for the eigenvalue $i$
for the highest noise level and fewest number of snapshots
in \cref{fig:samper_eig_conf}. Again, the bias in the exact DMD is
indicated by the fact that the ellipse is shifted into
the left half-plane. Curiously, the tlsDMD displays a different
type of bias, consistently overestimating the frequency of
the oscillation.
The fbDMD and optimized DMD are relatively bias-free, with
the fbDMD having a smaller spread along the real axis and the
optimized DMD having a smaller spread along the imaginary axis.
We believe that the strong performance of the fbDMD here is
rather intuitive: by averaging the forward and backward dynamics,
the fbDMD should nearly cancel the noise we've introduced with the
uncertain sample times. 

\begin{figure}[h!]

\centering

\includegraphics[width=.8\textwidth]{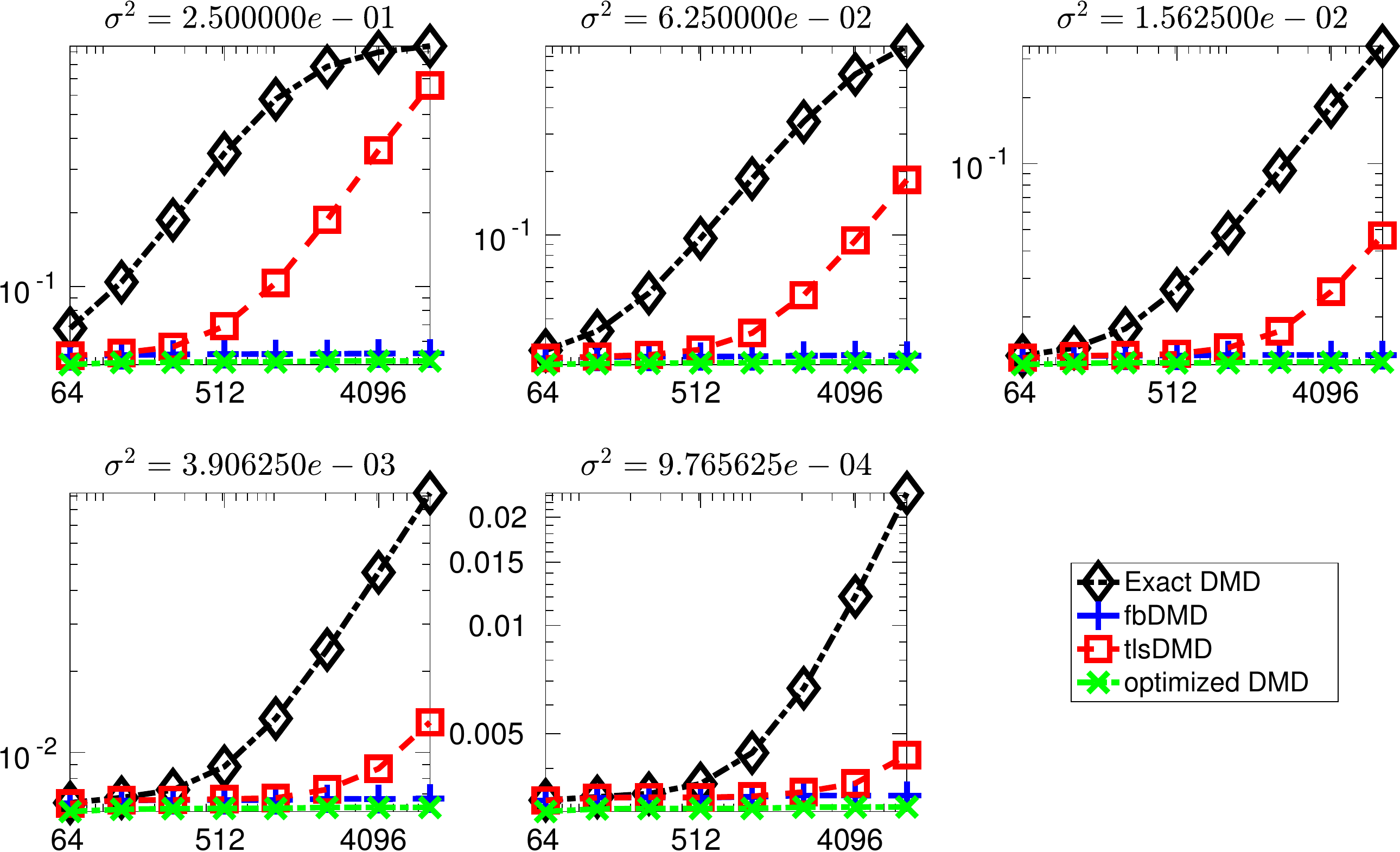}

\caption{Example 3. This figure shows the mean Frobenius norm error 
(averaged over 1000 runs) of the reconstructed snapshots
as a function of the number of snapshots $m$ for various 
noise levels $\sigma^2$.}

\label{fig:samper_rec}

\end{figure}

In \cref{fig:samper_rec}, we plot the mean error in the optimal
reconstruction of the snapshots using the computed eigenvalues,
see \cref{eq:optrecerr} for the definition of this error.
The fbDMD and optimized DMD perform the best
across noise levels and number of snapshots used, though
the advantage is more pronounced at higher noise levels.
The reconstruction error increases for the exact DMD and tlsDMD
as the number of snapshots increases, particularly
at the higher noise levels.

From the above, we see that uncertain sample times can
produce errors in the computed DMD modes and eigenvalues
which are qualitatively different from the errors produced
by sensor noise. We believe that this source of error
may be of interest when analyzing data collected by
humans or historical data sets. When dealing with real
data, it is clear how to perform sensitivity analysis
for additive sensor noise: simply rerun the method for
the data with sensor noise added. It is unclear how to
perform sensitivity analysis for uncertain sample times
using the exact DMD, fbDMD, or tlsDMD. Using the
optimized DMD, such an analysis is again simple: rerun
the method for the same data set while adding noise to the
sample times that you send to the optimized DMD routine.

\subsection{Example 4: Sea surface temperature data}

For the final example, we consider a real data set:
the ``optimally-interpolated'' sea surface temperature
(OISST-v2, AVHRR only) data set from the
National Oceanic and Atmospheric Administration (NOAA)
\cite{reynolds2007,banzon2016}.
This is a data set of daily average ocean temperatures,
with 1/4 degree resolution in latitude and longitude,
for a total of about 700k grid points over the ocean.
The temperatures are reported to four decimal digits.
We considered two subsets of this data: 521 snapshots
(10 years) spaced 7 days apart and 521 snapshots spaced
randomly, with an average of 7 days apart, with each set
starting on January 1st, 1982.

\begin{table}
  \centering
  \label{tab:sst_rec}
  \begin{tabular}{|c|c|c|c|c|}                                             
\hline                                                                   
rank & $\rho_{DMD}$ & $\rho_{tlsDMD}$ & $\rho_{optDMD}$ & $\rho_{POD}$ \\
\hline                                                                   
2 & 1.0446e-01 & 1.0447e-01 & 1.0437e-01 & 4.8298e-02 \\                 
\hline                                                                   
4 & 6.5921e-02 & 5.3714e-02 & 4.5387e-02 & 4.1057e-02 \\                 
\hline                                                                   
8 & 5.3453e-02 & 4.4864e-02 & 3.8655e-02 & 3.7276e-02 \\                 
\hline                                                                   
16 & 4.6376e-02 & 4.0935e-02 & 3.5443e-02 & 3.3905e-02 \\                
\hline                                                                   
32 & 3.8045e-02 & 3.2955e-02 & 3.1733e-02 & 3.0039e-02 \\                
\hline                                                                   
64 & 3.1939e-02 & 2.8125e-02 & 2.7884e-02 & 2.5879e-02 \\                
\hline                                                                   
128 & 2.7792e-02 & 9.8115e-01 & 2.3751e-02 & 2.1195e-02 \\               
\hline                                                                   
\end{tabular}                                                       

  \caption{Example 4. This table shows the relative
    residual of the best possible reconstruction using
    the eigenvalues obtained from the exact DMD,
    tlsDMD, and optimized DMD, as $\rho_{DMD}$,
    $\rho_{tlsDMD}$, and $\rho_{optDMD}$, respectively,
    for several different values of the reconstruction
    rank $r$. The value $\rho_{POD}$ is the relative norm
    of the residual when 
    projecting the data onto $r$ POD modes and represents a
    rough lower bound on the relative
    residual for DMD modes.
  }

\end{table}
    
\begin{figure}[h!]

\centering

\includegraphics[width=.8\textwidth]{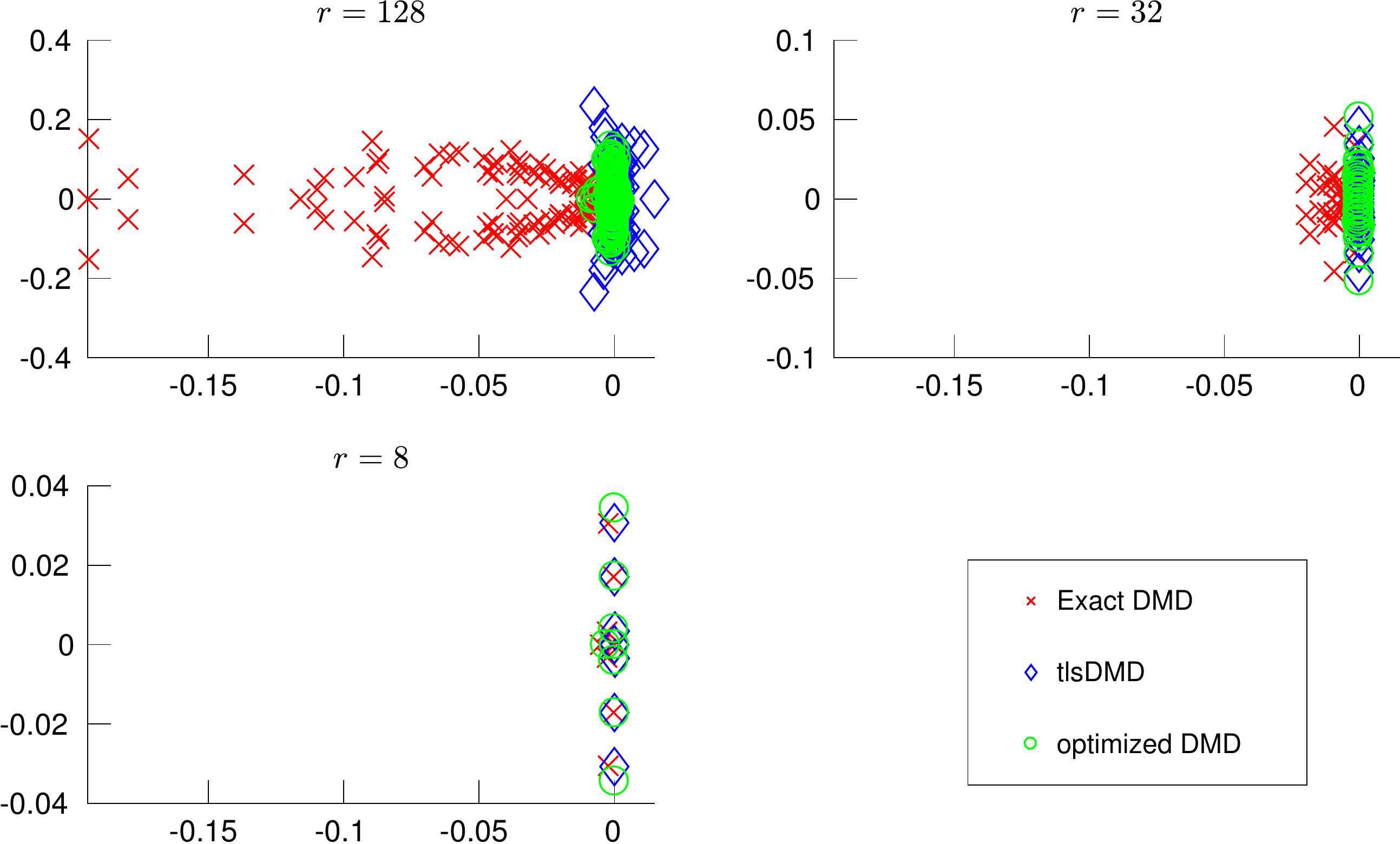}

\caption{Example 4. We plot the eigenvalues 
obtained using the exact DMD, tlsDMD, and optimized 
DMD for various target ranks $r$.}

\label{fig:sst_eig_vs_rank}

\end{figure}

When it comes to properly truncating this data set
for the DMD, there are a number of complicating factors:
the sensor noise is not simply additive white noise,
the values have been interpolated, and the underlying
dynamics are not linear. In particular, the assumptions
used to obtain the Gavish-Donoho formula are not satisfied.
In \cref{tab:sst_rec}, we see that the error in the
optimal reconstruction, using the eigenvalues for
any of the methods, decreases weakly as you increase
the rank of the system. This is largely driven by the
slow decay in the singular values of the data matrix
(compare the reconstruction quality for the
optimized DMD with that obtained
for POD modes). For the largest rank, $r = 128$,
the reconstruction error has actually increased
for the tlsDMD. We will see that this is due to some
spurious eigenvalues in the tlsDMD which correspond
to an unreasonable amount of growth.

In \cref{fig:sst_eig_vs_rank}, we produce scatter plots of
the DMD eigenvalues obtained from the exact DMD,
tlsDMD, and optimized DMD for various choices of the DMD
rank $r$. Setting $r = 128$ (this is close to the
value $r = 124$ obtained from the
Gavish-Donoho formula), the exact DMD
has a number of strongly decaying modes and there are
some growing modes visible for the tlsDMD. There is not much
agreement among the methods at this level. For
$r = 32$, the exact DMD and tlsDMD give more reasonable
values and there is more agreement among the methods
but still some significant discrepancy in
the eigenvalues. For $r = 8$, we see that all
of the methods obtain similar eigenvalues.
From the preceding, it is unclear how to choose
the correct rank $r$ without a priori
knowledge.

\begin{table}
  \centering
  \label{tab:sst_eigs}
  \begin{tabular}{|c|c|c|c|}                              
\hline                                                  
Coefficient & optimized DMD & tlsDMD & exact DMD \\     
\hline                                                  
+1.5302e+04 & +2.8122e+18 & +Inf & +Inf \\              
\hline                                                  
+9.8238e+02 & -1.3565e+17 & +Inf & +Inf \\              
\hline                                                  
+9.7476e+02 & -3.6530e+02 & -3.6695e+02 & -3.6767e+02 \\
\hline                                                  
+9.7476e+02 & +3.6530e+02 & +3.6695e+02 & +3.6767e+02 \\
\hline                                                  
+2.3189e+02 & -6.5046e+02 & -5.0713e+02 & -5.3827e+02 \\
\hline                                                  
+2.3189e+02 & +6.5046e+02 & +5.0713e+02 & +5.3827e+02 \\
\hline                                                  
+2.1204e+02 & -1.8261e+02 & -1.8957e+02 & -1.9056e+02 \\
\hline                                                  
+2.1204e+02 & +1.8261e+02 & +1.8957e+02 & +1.9056e+02 \\
\hline                                                  
+1.3309e+02 & -7.9859e+02 & -8.1385e+02 & -8.5878e+02 \\
\hline                                                  
+1.3309e+02 & +7.9859e+02 & +8.1385e+02 & +8.5878e+02 \\
\hline                                                  
+1.1419e+02 & -2.9084e+03 & -4.2224e+03 & -6.6036e+03 \\
\hline                                                  
+1.1419e+02 & +2.9084e+03 & +4.2224e+03 & +6.6036e+03 \\
\hline                                                  
+6.6960e+01 & +1.6955e+03 & +1.9867e+03 & +1.9270e+03 \\
\hline                                                  
+6.6960e+01 & -1.6955e+03 & -1.9867e+03 & -1.9270e+03 \\
\hline                                                  
+4.8591e+01 & -1.0973e+03 & -1.5181e+03 & -1.7531e+03 \\
\hline                                                  
+4.8591e+01 & +1.0973e+03 & +1.5181e+03 & +1.7531e+03 \\
\hline                                                  
\end{tabular} 

  \caption{Example 4. This table shows the wavelength
    (in days) for each eigenvalue computed
    using the exact DMD, tlsDMD, and optimized DMD,
    for the evenly spaced data with $r = 16$.
    The optimized DMD wavelengths are ordered
    according to the magnitude of the corresponding
    spatial mode and the other wavelengths are chosen
    in the order which best matches the optimized
    DMD values.}
  
\end{table}

Because this data set comes from temperature
measurements over time, we know some of the
wavelengths we should find in the data set.
In particular, there should be a background mode
with infinite wavelength and a mode corresponding
to a tropical year (365.24 days).
In \cref{tab:sst_eigs}, we see that these wavelengths
are discovered by each DMD method. The
tropical year wavelength recovered by the
optimized DMD method (365.30 days) is
remarkably close to the true value, particularly
considering that the data is only provided
to four decimal digits. We see that the
second harmonic of this frequency, or a wavelength
of half a tropical year, is also discovered
by the DMD methods. The half-year wavelength
from the optimized DMD (182.61 days)
is again quite accurate (actual value 182.62
days).

\begin{table}
  \centering
  \label{tab:sst_eigs_evsu}
  \begin{tabular}{|c|c|c|c|c|}                                                             
\hline                                                                                   
even --- $b$ & uneven --- $b$ & even --- $\lambda$ & uneven --- $\lambda$ & projection \\
\hline                                                                                   
+1.5302e+04 & +1.4465e+04 & +2.8122e+18 & -1.9361e+20 & +9.9964e-01 \\                   
\hline                                                                                   
+9.8238e+02 & +5.4390e+02 & -1.3565e+17 & +4.3709e+17 & +4.4502e-02 \\                   
\hline                                                                                   
+9.7476e+02 & +9.7456e+02 & -3.6530e+02 & -3.6526e+02 & +9.9981e-01 \\                   
\hline                                                                                   
+9.7476e+02 & +9.7456e+02 & +3.6530e+02 & +3.6526e+02 & +9.9981e-01 \\                   
\hline                                                                                   
+2.3189e+02 & +2.8990e+02 & -6.5046e+02 & -6.0671e+02 & +8.6006e-01 \\                   
\hline                                                                                   
+2.3189e+02 & +2.8990e+02 & +6.5046e+02 & +6.0671e+02 & +8.6006e-01 \\                   
\hline                                                                                   
+2.1204e+02 & +2.1168e+02 & -1.8261e+02 & -1.8259e+02 & +9.9591e-01 \\                   
\hline                                                                                   
+2.1204e+02 & +2.1168e+02 & +1.8261e+02 & +1.8259e+02 & +9.9591e-01 \\                   
\hline                                                                                   
+1.3309e+02 & +8.6639e+01 & -7.9859e+02 & -8.6279e+02 & +8.7601e-01 \\                   
\hline                                                                                   
+1.3309e+02 & +8.6639e+01 & +7.9859e+02 & +8.6279e+02 & +8.7601e-01 \\                   
\hline                                                                                   
+1.1419e+02 & +9.6051e+01 & -2.9084e+03 & -2.2726e+03 & +7.6004e-01 \\                   
\hline                                                                                   
+1.1419e+02 & +9.6051e+01 & +2.9084e+03 & +2.2726e+03 & +7.6004e-01 \\                   
\hline                                                                                   
+6.6960e+01 & +7.7304e+01 & +1.6955e+03 & +1.4850e+03 & +8.0135e-01 \\                   
\hline                                                                                   
+6.6960e+01 & +7.7304e+01 & -1.6955e+03 & -1.4850e+03 & +8.0135e-01 \\                   
\hline                                                                                   
+4.8591e+01 & +1.0787e+02 & -1.0973e+03 & -1.2015e+03 & +7.5979e-01 \\                   
\hline                                                                                   
+4.8591e+01 & +1.0787e+02 & +1.0973e+03 & +1.2015e+03 & +7.5979e-01 \\                   
\hline                                                                                   
\end{tabular}                    

  \caption{Example 4. This table compares the
    wavelengths $\lambda$ obtained using the
    optimized DMD for each of the evenly spaced
    and randomly spaced data sets,
    with $r = 16$.
    The wavelengths obtained from the evenly spaced
    data are ordered
    according to the magnitude of the corresponding
    spatial mode (the $b$ values) and the
    wavelengths for the randomly spaced data are chosen
    in the order which best matches the evenly 
    spaced DMD values. In the last column, we
    report the cosine of the angle between
    the spatial mode from the evenly spaced
    data and the spatial mode from the randomly
    spaced data.}
  
\end{table}

\begin{figure}[h!]

\centering

\includegraphics[width=.8\textwidth]{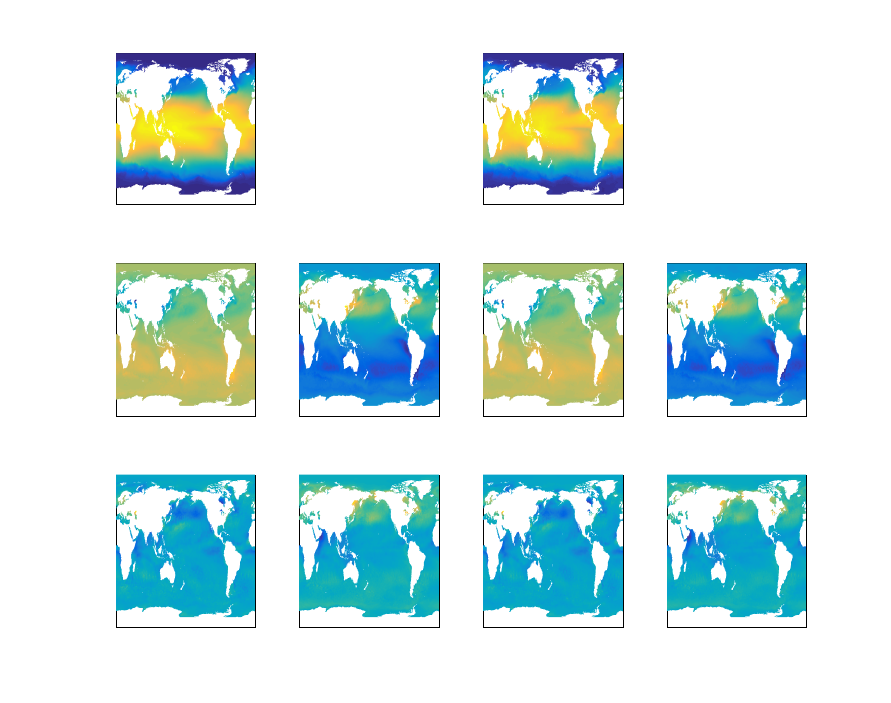}

\caption{Example 4. We plot the spatial modes obtained
  using the optimized DMD for two different subsets of the
  data, one with evenly spaced snapshots (left two columns) and
  randomly spaced snapshots (right two columns). The top
  row corresponds
  to a static background mode, the middle row the real and
  imaginary parts of a mode with a
  one-year wavelength, and the bottom row the real and imaginary
  parts of a mode with a half-year wavelength.}

\label{fig:sst_modes_evsu}

\end{figure}

One way to verify the eigenvalues
obtained from the optimized DMD on the evenly
spaced data set is to compare these with
the eigenvalues from the randomly spaced data
set. For $r = 16$, we computed DMD eigenvalues
and modes using the optimized DMD on each
data set.
The wavelengths corresponding to these
eigenvalues are reported
in \cref{tab:sst_eigs_evsu}. There is
good agreement for the infinite, one year,
and half-year wavelengths, and less-so
for the others. Further, the
spatial modes for these wavelengths are
very similar. We measure this using the cosine
of the angle between the corresponding
spatial modes, which is near
1 in absolute value for the infinite,
one year, and half-year wavelengths.
Heatmaps of these modes are provided
in \cref{fig:sst_modes_evsu}. We find this
to be a convincing confirmation of these
wavelengths, which did not require
a priori knowledge. 

\begin{table}
  \centering
  \label{tab:sst_times}
  \begin{tabular}{|c|c|c|c|}                       
\hline                                           
rank & $t_{DMD}$ & $t_{tlsDMD}$ & $t_{optDMD}$ \\
\hline                                           
2 & 2.7675e-01 & 2.7798e-01 & 1.3367e+00 \\      
\hline                                           
4 & 4.3028e-01 & 3.6161e-01 & 1.1965e+00 \\      
\hline                                           
8 & 7.1807e-01 & 3.6795e-01 & 1.0157e+00 \\      
\hline                                           
16 & 6.7264e-01 & 3.8975e-01 & 1.6571e+00 \\     
\hline                                           
32 & 1.0951e+00 & 1.2162e+00 & 4.7037e+00 \\     
\hline                                           
64 & 2.1410e+00 & 1.5664e+00 & 1.6101e+01 \\     
\hline                                           
128 & 5.4412e+00 & 3.4158e+00 & 3.0226e+01 \\    
\hline                                           
\end{tabular}  

  \caption{Example 4. The run time (excluding
    the SVD of the data used for projecting
    onto POD modes) for each method and various
    values of the rank $r$ on the
    evenly spaced data set.}
  
\end{table}

Some basic timing info for these
calculations is provided in
\cref{tab:sst_times}. The time reported is the
total time used by the algorithm, excluding
the cost of the SVD used to project the data
onto POD modes (the time for this calculation was
32 seconds). For ranks less than or equal to
$r = 32$, the optimized DMD costs only about 4 times
as much as the other methods. For the largest
rank, $r = 128$, the optimized DMD is about a factor
of 10 times more costly. Even then,
the cost of the optimized DMD is roughly
equal to the cost of the initial SVD.
For larger $r$, the computational cost of
the optimized DMD appears to increase like $r^2$,
which is lower than the bound we expect
based on the estimates in \cref{sec:algorithm}.

\section{Conclusions and future directions}

Based on the numerical experiments above, we 
believe that the optimized DMD is the DMD algorithm of choice
for many applications. The resulting modes and
eigenvalues are less sensitive to noise than those
computed using the other DMD methods we tested 
and the optimized DMD overcomes the bias issues 
of the exact DMD.   
In some cases, the improvement over existing methods
in robustness to noise is significant. For example
2, the optimized DMD algorithm is better able to 
capture the hidden dynamics than the other DMD methods,
sometimes showing an order of magnitude improvement in the
error. For the sea surface temperature data, example 4, 
the optimized DMD obtains modes which more accurately 
describe yearly patterns; indeed, the accuracy of the 
yearly and half-yearly wavelengths obtained by the optimized
DMD is comparable with the accuracy of the data
(the exact DMD and tlsDMD are an order of magnitude
less accurate for these wavelengths).

Of course, these advantages come
at a cost: computing the optimized DMD requires the
solution of a nonlinear, nonconvex optimization
problem. As noted above, it is unclear whether
we have actually solved this optimization problem
(globally) in our numerical experiments. 
The solutions we have obtained nonetheless represent 
improvements over existing DMD methods and the cost 
of the optimization algorithm
is modest for the range of problem sizes considered
above. We see in \cref{fig:senhid_times}
that, for this problem size, the cost of the optimized 
DMD is only about 1.5 times
that of the exact DMD in the worst case (as the number
of snapshots increases, the cost of the projection onto
POD modes begins to dominate the calculation, so the times
for each method become closer to equal). For the 
larger climate example, the run time of the optimized
DMD was about 6 times that of the exact DMD (for various
values of the reconstruction rank $r$), see
\cref{tab:sst_times}. This is
a more significant cost increase, but even for the
largest rank, $r = 128$, the cost of the optimized DMD
was roughly equal to that of the SVD required to
project onto POD modes (this cost is left out of the
values in \cref{tab:sst_times}). We should stress that
these timings depend strongly on the implementation of the
given algorithms and even the parameters sent to
the optimization routine. The MATLAB implementation of 
\cref{algo:optdmd,algo:approxoptdmd} we prepared for
these experiments is available online 
\cite{askham2017figcode,askham2017optdmd}.

The apparent efficiency of the optimized DMD algorithms
is a result of the variable projection methods which
have been developed for nonlinear least squares problems. 
Further, by rephrasing the problem as 
fitting exponentials to data, the optimized DMD 
also represents a more general method. It is no longer
necessary to assume that the snapshots are evenly
spaced in time. 

There are a few different avenues available for 
future research. As mentioned above, the variable
projection framework applies to a wide range of 
optimization problems and could therefore serve as
the basis for an optimized DMD with the addition
of a sparsity prior (see \cref{subsec:modvarpro}).
Such a method could help side-step the problem of 
choosing the correct target rank a priori. Also mentioned
above is the possibility of using a block Schur decomposition,
as opposed to an eigendecomposition, in the definition
of the DMD. This could potentially improve the ability
of the DMD to stably approximate transient dynamics. 
Finally, a few new directions are available because
the sample times need not be evenly spaced for 
the optimized DMD. As seen in miniature for the climate
example, making use of arbitrary sample times
allows for some interesting types of cross-validation
(and indeed expands the number of possible cross-validation
sets for a given set of snapshots).
There is also the possibility of 
using incoherent time sampling (e.g. randomly spaced
times) to detect high frequency signals using 
less data than implied by the Shannon sampling 
theorem. 

\section*{Acknowledgments} 

The authors would like to thank Professor Randall J. LeVeque
for pointing them to the inverse differential equations 
application in \cite{golub1979}. JNK acknowledges helpful 
and insightful conversations with Steven Brunton, Bingni Brunton, 
Joshua Proctor, Jonathan Tu and Clarence Rowley.


\begin{appendix} 

\end{appendix}

\bibliographystyle{siamplain}
\bibliography{optimized-dmd-2017}

\end{document}